\documentclass{no_journal}
\usepackage{bm}
\usepackage{caption}
\usepackage{subcaption}
\usepackage{xcolor}
\usepackage{dsfont}
\usepackage[square,numbers]{natbib} 
\usepackage{comment}
\usepackage{graphicx,natbib,latexsym}
\usepackage{dsfont}
\usepackage{multirow}
\usepackage{float}
\usepackage{rotating}
\usepackage{moresize}
\usepackage{enumerate}
\usepackage{tabularx}
\usepackage{color} 
\usepackage{amsmath,amsfonts,amssymb}
\usepackage{graphicx} 
\usepackage{mathtools}
\usepackage{dsfont}
\usepackage{float}
\usepackage{xcolor}
\usepackage{subcaption}
\usepackage{hyperref}
\hypersetup{colorlinks=true, linkcolor=blue, urlcolor=blue, citecolor=blue}

\theoremstyle{plain}
\newtheorem{theo}{Theorem}
\newtheorem{lemm}{Lemma}
\newtheorem{coro}{Corollary}
\newtheorem{defi}{Definition}
\newtheorem{assu}{Assumption}

\theoremstyle{remark}
\newtheorem{exam}{Example}
\newtheorem{rema}{Remark}

\renewcommand{\P}{{\mathbb P}}
\renewcommand{\d}{{\mathrm d}}
\renewcommand{\hat}{\widehat}
\renewcommand{\tilde}{\widetilde}
\newcommand{\E}{{\mathbb E}}
\newcommand{\Reals}{\mathbb{R}} 
\newcommand{\R}{\mathbb{R}}
\newcommand{\N}{\mathbb N}

\newcommand{\lebesgue}{\mathcal{L}}
\newcommand{\I}[1]{\mathds{1}_{\{#1\}}}

\newcommand{\dist}{\mathrm{dist}}

\newcommand{\norm}[1]{  \| #1   \|}

\definecolor{R}{RGB}{255, 150, 0}
\definecolor{T}{RGB}{0, 100, 255}

\authornames{R.~COTSAKIS {\it et al.}} % insert the authors here for use in running head. If three or more authors please use (for example) M.~YARROW {\it et al.} Author names should follow the same M.~YARROW format and if two authors, separate by 'AND'.
\shorttitle{On the spatial extent of extreme threshold exceedances} % insert short title here for use in running head

% Put any of your own definitions here.

%\numberwithin{equation}{section}  % If you number theorems, etc. within sections,
                                   % then please uncomment this line to number
                                   % equations with sections too.

\begin{document}%\recd{}{}%Do not alter this line.

\title{On the spatial extent of extreme threshold exceedances} % insert title

\authorone[University of Lausanne]{Ryan Cotsakis} 
\authorone[Universit\'{e} C\^{o}te d'Azur]{Elena Di Bernardino} 
\authorone[INRAE]{Thomas Opitz} 
 
\addressone{Expertise Center for Climate Extremes (ECCE), Faculty of Business and Economics (HEC) - Faculty of Geosciences and Environment, University of Lausanne, CH-1015 Lausanne, Switzerland} 
% Your postal address goes here.
\emailone{ryan.cotsakis@unil.ch} %Authors email goes here. 

\addressone{Laboratoire J.A. Dieudonn\'{e}, UMR CNRS  7351,  Nice, 06108, France}
\emailone{Elena.Di\_bernardino@unice.fr}
\addressone{Biostatistics and Spatial Processes,  228 route de l'A\'erodrome, Avignon, 84914, France}
\emailone{thomas.opitz@inrae.fr}
 
\begin{abstract}
We introduce the \emph{extremal range}, a local statistic for studying the spatial extent of extreme events in random fields on $\Reals^{d}$. Conditioned on  exceedance of a high threshold at a location $s$, the extremal range at $s$ is the random variable defined as the smallest distance from $s  {\in \Reals^d}$ to a location where there is a non-exceedance. We leverage tools from excursion-set theory, such as Lipschitz-Killing curvatures, to express distributional properties of the extremal range, including asymptotics for small distances and high thresholds. 
The extremal range captures the rate at which the spatial extent of conditional extreme events scales for increasingly high thresholds, and we relate its distributional properties with the well-known  bivariate tail dependence coefficient and the extremal index of time series in Extreme-Value Theory. We calculate theoretical extremal-range properties for commonly used models, such as Gaussian or regularly varying random fields. 
Numerical studies illustrate that, when the extremal range is estimated from discretized excursion sets observed on compact observation windows, the distribution of the resulting estimators appropriately reproduces the theoretically derived links with the Lipschitz-Killing curvature densities.
\end{abstract}

\keywords{Asymptotic dependence, Excursion  set,  Lipschitz-Killing curvature, Spatial extreme, Stochastic geometry, Threshold exceedance} 

\ams{60G60}{ 60G70, 62M40, 62H11}

\acks This work has been supported by the French government, through the 3IA C\^{o}te d'Azur Investments in the Future project managed by the National Research Agency (ANR) with the reference number ANR-19-P3IA-0002 and the project  France 2030  ANR-22-EXIR-0008.  Contributions to the Mathematics Stack Exchange by Willie W.Y. Wong were highly influential in constructing the proof of Lemma~\ref{lem:continuity}.

% \fund % Place any funding information for this work after the \fund (or \Fund) command.
% There are no funding bodies to thank relating to this creation of this article.

\competing % Place any information on competing interests after the \competing (or \Competing) command. 
There were no competing interests to declare which arose during the preparation or publication process of this article.

\data 
Code related to the simulations found in Section \ref{numericSection}  can be found at \url{https://github.com/napped-eel-pecan/Extremal-Range}.  
  
 \section{Introduction}
{The spatial  dependence of extreme events is an extensively studied topic in the theory of stochastic processes in $\mathbb{R}^d$ \citep[see, e.g.,][]{Leadbetter1988,Hult2005}  and is of crucial importance for modeling climatic and environmental extreme events. In applications,  extreme risks often arise from concurrence and compounding of extremes in time $(d=1)$, in geographic space  $(d=2)$ or in space and time $(d=3)$; \citep[see, e.g.,][]{dombry2018,aghakouchak2020climate}). Threshold exceedances, and the excursion sets describing the area of space where exceedance takes place, are a useful tool for theoretical and practical analyses. Using excursion sets, we here focus on studying the spatial contiguity of extreme events locally around a given location to allow for the theoretical and empirical description of the spatial extent of extreme clusters.}

%with the aim to avoid strong parametric assumptions and high numerical cost when inferring spatial extremal dependence properties from large datasets on regular grids. 

Analysis of excursion sets has become valuable in spatial statistics (see, e.g., \cite{bolin2015,sommerfeld2018}) and computer vision (see, e.g.,  
\cite{bleau2000,sezgin2004}), especially for data on regular grids, such as climate model output, remote sensing data or medical images. 
We use the framework of Extreme-Value Theory \citep[EVT,][]{dehaan2006}, useful to formulate general tail-regularity assumptions.
%and enable statistical extrapolation towards very high and even yet unobserved quantiles. 
The standard asymptotic models in spatial EVT  exhibit asymptotic dependence where the limiting dependence structure of threshold exceedances is characterized by Peaks-Over-Threshold stability \citep{ferreira2014,dombry2015,thibaud2015}. However, strong empirical evidence from many environmental processes advises against this property \citep{tawn2018,huser2022}. Often, spatial dependence between threshold exceedances is lost as thresholds are increased, and it may ultimately vanish in the case of asymptotic independence. More flexible subasymptotic models have been proposed  to accommodate asymptotic independence or even both situations of asymptotic (in)dependence (see, e.g.,  \cite{huser2017,huser2022,zhang2022}).

%\tho{To give an application example, in a geographic setting ($d=2$) the extremal range could correspond to the minimum distance an individual located at $s$ has to traverse for leaving a risk region characterized by $X(s)>u$, for example heat stress when $X(s)$ is temperature.}

Here, we use a setting borrowing from the idea of studying the process conditional to exceedance above a high threshold at a location of interest to better understand spatial joint tail decay behavior near this location. This approach provides useful  representations of classical extreme-value limits such as max-stable processes \citep[e.g.,][]{Engelke2014,Bienvenue2017} and has been generalized to more flexible and statistically tractable representations of spatial extremal dependence in the conditional extremes framework \citep{heffernan2004,wadsworth2022}. 
% \textbf{Conditional Spatial Extremes literature:}
%Recent literature has become increasingly interested in conditional spatial extremes \citep{heffernan2004, dombry2018, wadsworth2022}. 
The tail dependence coefficient $\lim_{u\rightarrow 1} \P(F_2(X_2)>u\mid F_1(X_1)>u)$ of two random variables $X_i$ following distribution functions $F_i$,  $i=1,2$, is a conditional probability that is a routinely used exploratory and diagnostic tool to assess the strength of bivariate extremal dependence \citep{coles1999}.  As noted by \cite{wadsworth2022,Huser2024}, it is common in environmental data for the spatial dependence to weaken as the considered threshold increases. One interpretation of this phenomenon, the inspiration for the statistics introduced in this paper, is that the \textit{spatial extent} of extreme events tends to decrease with an increase in the threshold level.
Thus, in this paper, we focus on the size and other geometric properties of excursion sets of continuous  random fields---the regions where the random fields exhibit threshold exceedances.

There is a vast literature concerning the geometric features of excursion sets of random fields; see \cite{adler2007} for a comprehensive introduction. Links to Extreme-Value Theory arise when behavior at extreme thresholds, or behavior of maxima, is studied \citep[e.g.,][]{Sun1993,Schlather2003,Last2023}. For smooth random fields, geometric summaries of excursion sets, namely their Lipschitz-Killing  curvatures (LKCs), carry pertinent information about the asymptotic dependence structure at extreme thresholds (see, e.g., \citep{Adler_Samorodnitsky_Taylor_2010, dibernardino2022}). 
In this paper, we introduce a new local statistic, the \textit{extremal range}. The extremal range at a site $s\in\Reals^d$ is defined as the largest radius $r$ around $s$ such that all locations within $r$ are extreme, conditioned on a threshold exceedance at~$s$. We will explore how the extremal range relates to the $d$ and $(d-1)$-dimensional Lipschitz-Killing  curvatures  of the excursion set and to the notion of asymptotic dependence defined by a positive value of the tail dependence coefficient. {Conceptually, the notion of extremal range bears some similarity to the inradius of cells in a tessellation of $\mathbb{R}^d$ (intersected with an observation window), i.e., of the radius of the largest possible hyperball that can be inscribed into a cell \citep{Chenavier2016}.}

% \textbf{Comparison between the new extremal range and extremal index / extremal dependence coefficient:}
The extremal range can be seen as a spatial analogue to the extremal index \citep{Moloney2019}, a popular asymptotic statistic for time series extremes that allows for interpretation as the reciprocal of the average number of consecutive time steps over which an extreme cluster spans. In this sense, both quantities provide a notion of the size of clusters of extremes. However, several notable distinctions can be made. Firstly, we consider the contiguous $d$-dimensional Euclidean space and not one single time dimension with regular discrete time steps. In one dimension, the distributional properties of the extremal range and its asymptotics at high thresholds can be obtained by studying sojourn times of one dimensional stochastic processes \citep{berman1971, berman1982, kratz2006, pham2013, dalmao2019}. Where the classical extremal index is equal to unity in the case of asymptotic independence and therefore not informative, the extremal range can be used to quantify more precisely the degree of asymptotic dependence for asymptotically independent random fields. An important practical difference further stems from the fact that edge effects at the boundary of the observation domain play a more important role in the $d$-dimensional spatial setting than in the temporal one. We therefore focus on expressions that can be  formulated and properly estimated using the random field restricted to a compact observation window.

Our results are organized as follows. Section \ref{sec:defs} introduces the extremal range and related notations. In Section \ref{sec:parametrization}, we express the cumulative distribution function of the extremal range through the $d$ and $(d-1)$-dimensional Lipschitz-Killing  curvatures  of the excursion regions. In Section \ref{sec:model}, we study the asymptotic behavior of the extremal range for common random field models as the threshold at the conditioning location  is increased. 
Some technical definitions and examples are postponed to Appendix~\ref{sec:appendix}. Finally, proofs of results of Sections \ref{sec:parametrization} and~\ref{sec:model} are provided in Appendix~\ref{ProofsProvidedResults}.

\section{The extremal range and relevant notations}\label{sec:defs}

Let $(\Omega,\mathcal{F},\P)$ be a probability space and let $X:\Omega\times \Reals^{d} \rightarrow \Reals$ be a random field defined on $\Reals^{d}$, endowed with the Euclidean metric $\norm{\cdot}$. For a  domain $S\subseteq\Reals^{d}$, let $\partial S$ denote its topological boundary.  Furthermore, for a set $S\subset \Reals^{d}$,
let $\lebesgue_{d-1}(\partial S)$ denote the {surface area} of $S$ (\emph{i.e.}, the {$d-1$}-dimensional Hausdorff measure of its boundary) and  $\lebesgue_d(S)$ the volume of $S$ (\emph{i.e.}, the Lebesgue measure of $S$ on $\Reals^d$). 
 For $x\in\Reals^{d}$, denote the distance between $x$ and a non-empty set $S$ by $\dist(x,S):= \inf\{\norm{x-s} : s\in S\}$. 
Throughout this paper, $u:\Reals^{d}\to \Reals$  denotes a deterministic threshold function that is allowed to vary in space, and we focus on the binary random field of excursion indicators {$\{X(s) > u(s)\}_{s\in\Reals^{d}}$}. This is expressed in terms of the following definition.

\begin{defi}[Excursion   and level sets]\label{DefExcursionSet}
Let  $X$ be a random field on $\Reals^{d}$ and $u:\Reals^{d}\to \Reals$ be a    threshold function. Define the  excursion set of $X$ {for threshold $u$}  as 
$$E_X(u) := \{s\in\Reals^{d}:X(s) > u(s)\}.$$
 Similarly, we will consider in the present paper the  level surfaces   as   the boundary of $E_X(u)$, denoted 
 $\partial E_X(u)$. Furthermore,  the  excursion set and its boundary  when intersected with an observation window $T$ are  denoted by 
$E_{X}(u)  \cap T$ and $ \partial   (E_{X}(u) \cap T)$,  where   $T \subset \Reals^d$  is  a bounded  closed  set  with non empty interior.  
\end{defi}

The excursion set in Definition  \ref{DefExcursionSet} can be fully characterized by its capacity functional (see, e.g., \cite{Kratz_Nagel_2016}) i.e., $\P(E_X(u) \cap  K \neq \emptyset),$ for all compact  subsets $K \subset \Reals^d$. 

The following definition allows one to choose an appropriate threshold function for the random field $X$, such that threshold exceedances occur with positive probability. 

\begin{defi}[Upper bound]\label{upperBound}
Let $u_X^* : \Reals^d \to \R \cup \{\infty\}$ be a deterministic function defined by $s \mapsto \inf\{u\in\Reals: \P(X(s) > u) = 0\}$, so that $u_X^*(s)$ is the upper end-point of the support of the distribution of $X$ at location $s$.
\end{defi}

\begin{defi}[Extremal range]\label{def:extremal_range}
For $r>0$ and $s\in\Reals^{d}$, let {$B(s,r) := \{s'\in\Reals^{d}:\norm{s'-s} \leq r\}$} denote the closed {Euclidean} ball of radius $r$ centered at $s$. Let   {$\tilde R^{(u)} : \Omega \times \Reals^{d} \to [0,  +\infty]$} be a random field defined by 
{$$\tilde R^{(u)} (s) := \sup \left\{r\in\Reals^+ : B(s,r) \subset E_X(u)\right\} = \dist\big(s,(E_X(u))^c),\qquad s\in\Reals^{d},$$}
{where   $A^c$ stands for the complement set of set $A$.}
Let $s\in\Reals^{d}$ satisfy {$u(s) < u_X^*(s)$}, with $u_X^*(s)$ as in Definition \ref{upperBound}.  Define the \textit{extremal range} at $s$, denoted $R^{(u)}_s$, to be the {conditioned} random variable whose pushforward measure is given by
$$\P(R^{(u)}_s \in A) = \P\big(\tilde R^{(u)}(s) \in A  \,\vert\, X(s) > u(s)\big),\qquad A\in \mathcal{B}(\Reals),$$
{where $\mathcal{B}(\Reals)$ is the Borel $\sigma-$algebra of $\Reals$.}
\end{defi}

{Notice that the conditional variable $R^{(u)}_s$ exists only for $\omega\in\Omega$ with $X(s;\omega)>u$; therefore, it is defined on the probability space from which elements $\omega$ with $X(s;\omega)\leq u$ have been removed.}
{An illustration in dimension  $d=2$ of the excursion set $E_X(u)$ in Definition \ref{DefExcursionSet} and $\tilde R^{(u)}(s)$ in Definition \ref{def:extremal_range} is provided in Figure \ref{fig:excursion}.}

\begin{figure}[h!]
\includegraphics[width=0.9\linewidth]{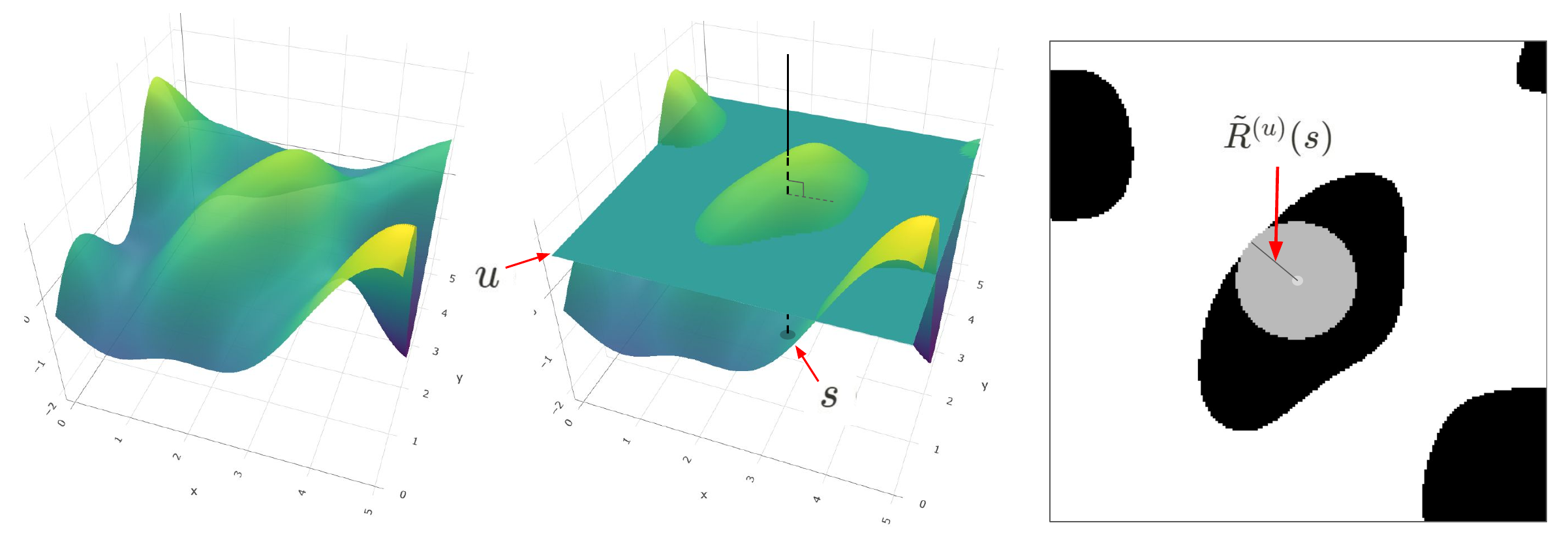}
    \caption{Example of an excursion set $E_X(u)\cap T$ with the quantity $\tilde{R}^{(u)}(s)$ shown for  a chosen $s\in T \subset \mathbb{R}^2$.}
    \label{fig:excursion}
\end{figure}

\begin{rema}\label{rem:extremal_index}
    As discussed by \cite{Moloney2019} in the time series context, the inverse of the so-called extremal index quantifies the average size of clusters of threshold exceedances, \emph{i.e.}, for one-dimensional discretely supported random processes. Analogously, the extremal range provides a notion of the size of the clusters of sites that exhibit threshold exceedances for continuous  random fields. 
\end{rema}

Definition \ref{def:minkowski} below is relevant to establish the main results for the extremal range.

\begin{defi}[Erosion and dilation]\label{def:minkowski}
For two nonempty sets $A, B \subseteq \Reals^{d}$, let $A \oplus B := \{x+y:x\in A, y\in B\}$ be the Minkowski sum of $A$ and $B$.
For $r\in\Reals$, and $S\subseteq \Reals^{d}$ let
$$S_r := \begin{cases}
        S\oplus B(0,r), & \text{for } r\geq 0,\\
        \big(S^c \oplus B(0,-r)\big)^c, & \text{for } r < 0,
        \end{cases} 
$$
denote respectively the set dilation and the set erosion, depending on the sign  of $r$.
\end{defi}
{To illustrate Definition \ref{def:minkowski}, in Figure \ref{fig:erosion} two types of erosion of $E_X(u)\cap T$ in dimension 2 are shown.} 
\begin{figure}[h!] 
    \centering
    \includegraphics[width=0.6\linewidth]{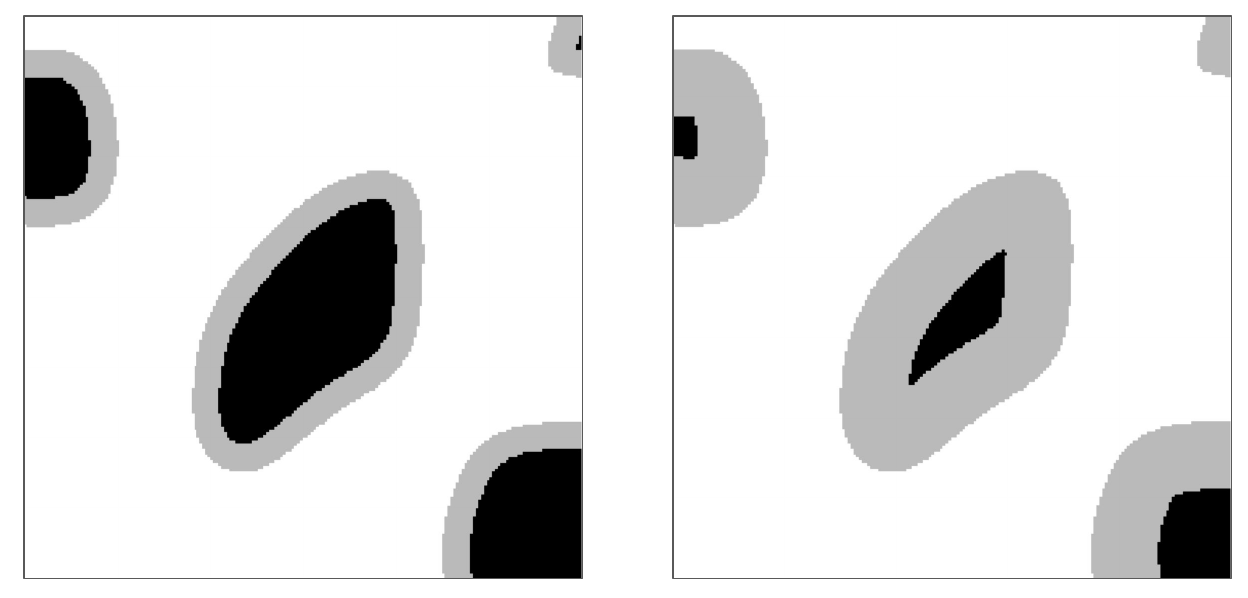}
    \caption{Two amounts of erosion of the excursion set $E_X(u)\cap T$ from Figure~\ref{fig:excursion}.}
    \label{fig:erosion}
\end{figure}
 
\begin{assu}\label{assumtion}
Let  $X$ be a random field on $\Reals^{d}$ and $u:\Reals^{d}\to \Reals$ be a    threshold function. 
 Suppose that for the random field $X$ paired with the threshold function $u$, the random 
 set $E_X(u)$ is stationary, and $E_X(u)^c \cap T$ has positive reach almost surely.     
\end{assu}

For sake of completeness, the definitions of     positive reach set  and stationary random set   
 are  recalled in  Definition~\ref{def:positive_reach} and  \ref{StationaryRandom setDef} respectively, see Appendix~\ref{sec:appendix_reach}.    
 
\begin{defi}[$d$ and $(d-1)$-dimensional Lipschitz-Killing curvature densities]\label{ass:regularity}
Under Assumption~\ref{assumtion}, for any  compact and   convex  
set $T \subset \Reals^d$ with non empty interior,  
one can define 
\begin{align} 
C_{d-1}^*(E_{X}(u)) &:= \lim_{n\to\infty} \frac{1}{2 }\frac{\E[\lebesgue_{d-1}(\partial(E_{X}(u)\cap nT))}{\lebesgue_d(nT)}, \label{haus}\\ 
C_{d}^*(E_{X}(u)) &:= \lim_{n\to\infty}\frac{\E[\lebesgue_d(E_{X}(u)\cap nT)]}{\lebesgue_d(nT)}, \label{volume} 
\end{align}
where $nT$ is the result after linearly rescaling $T$ by $n$. Under Assumption~\ref{assumtion},  
 quantities in \eqref{haus}-\eqref{volume}  exist, are finite, and independent of $T$. \end{defi} 
 
  Notice that if $X$ is  almost surely twice differentiable and $u$ a constant level function, then random set $E_{X}(u)^c \cap T$ has positive reach, as $E_{X}(u)^c$ is a $C^2$ sub-manifold of
$\Reals^d$ and its intersection with  $T$ provides compactness and positive
reach property (see \cite[Proposition~14]{thale2008}).   The finite quantities in \eqref{haus}-\eqref{volume}    correspond respectively to what we call in the sequel  half surface
area density (with a slight abuse of language), and $d$-volume 
  density.  Note that $C_i^*(E_X(u))$, for  {$i=d-1,d$},  {are two of} limiting normalized Lipschitz-Killing curvatures of the excursion set $E_X(u) \cap T$ seen on large domains (see e.g.,  Theorem~9.3.3 in \cite{schneider2008}). They play an important role in determining the shape of the distribution function of the extremal range $\tilde R^{(u)} (s)$ in Definition \ref{def:extremal_range}; a topic that we investigate in the following of the present work. 

\begin{rema}[Discussion of Assumption~\ref{assumtion}]
Notice  under Assumption~\ref{assumtion} the random field $X$ is not necessarily stationary, as $u$ is not necessarily a constant function in space. What is necessary instead is that the \textit{excursion set} at the level $u$ be stationary.  An example to illustrate this weak  condition   is given in Figure  \ref{fig:discontinuous} in Appendix \ref{Figure5Appendix}.  An important, easily verifiable consequence of this is that,   $C_{d}^*(E_X(u)) = \P(X(0) > u(0))$,  for $u$  as in  Assumption~\ref{assumtion}.  This  assumption also implies that $E_X(u) \cap T$ is almost surely open, as its complement must be closed to satisfy the positive reach property. Furthermore notice that Gaussianity is not a necessary condition for our results, except for Proposition~\ref{prp:gaussian_nondegen_limiting} specifically focusing on results for such fields. A final remark on the generality of Assumption \ref{assumtion} is that the almost sure continuity of $X$ and $X-u$ are not necessary (see \textit{e.g.}, the random field in Figure~\ref{fig:discontinuous} in Appendix \ref{Figure5Appendix} with a constant threshold $u \geq 1$).
\end{rema}

\section{Linking the extremal range and the Lipschitz-Killing  curvatures}\label{sec:parametrization}
{The following proposition relates the distribution function of the extremal range to the eroded excursion set observed  in $T$. More precisely, Proposition~\ref{prp:cdf} below states that the eroded excursion set $ E_X(u)_{-r}$ carries information about the distribution of $R_s^{(u)}$ through its volume when intersected with a compact set $T$.  }

\begin{proposition}\label{prp:cdf} 
Under Assumption~\ref{assumtion}, for any compact set $T\subset \Reals^d$ with $\lebesgue_d(T) > 0$, the distribution function of $R^{(u)}_0$ is given by
\begin{equation}\label{eqn:R_distn_func}
\P\big(R^{(u)}_0 \leq r\big) = 
    1-\frac{\E\big[\lebesgue_d\big(E_X(u)_{-r}\cap T\big)\big]}{\E\big[\lebesgue_d\big(E_X(u)\cap T\big)\big]},
\end{equation}
for $r\geq 0$, and $\P\big(R^{(u)}_0 < 0\big) = 0$, where the subscript $-r$ denotes set erosion by a radius of $r$ (see Definition~\ref{def:minkowski}).
\end{proposition}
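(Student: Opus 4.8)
The plan is to rewrite the conditional distribution function of $\tilde R^{(u)}(0)$ directly in terms of coverage probabilities of the eroded random set $E_X(u)_{-r}$, and then to convert those coverage probabilities into normalised expected volumes via stationarity and Fubini's theorem. The assertion $\P(R^{(u)}_0 < 0) = 0$ is immediate, since $\tilde R^{(u)}(0) = \dist\big(0,(E_X(u))^c\big) \geq 0$ by construction; so it remains to compute, for $r \geq 0$,
$$\P\big(R^{(u)}_0 > r\big) = \P\big(\tilde R^{(u)}(0) > r \,\vert\, X(0) > u(0)\big) = \P\big(\tilde R^{(u)}(0) > r \,\vert\, 0 \in E_X(u)\big).$$

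The crux is a set identity: for every $r \geq 0$,
$$\{\tilde R^{(u)}(0) > r\} = \{B(0,r) \subseteq E_X(u)\} = \{0 \in E_X(u)_{-r}\} \qquad \text{a.s.}$$
The rightmost equality is purely set-theoretic: unfolding $E_X(u)_{-r} = \big((E_X(u))^c \oplus B(0,r)\big)^c$ (Definition~\ref{def:minkowski}) and using $B(0,r) = -B(0,r)$, one has $0 \in E_X(u)_{-r}$ iff there is no $x \in (E_X(u))^c$ with $\norm{x} \leq r$, i.e., iff $(E_X(u))^c \cap B(0,r) = \emptyset$, equivalently $B(0,r) \subseteq E_X(u)$. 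The first equality uses that $E_X(u)$ is almost surely open --- a consequence of Assumption~\ref{assumtion}, since $(E_X(u))^c$ must be closed to have positive reach: if $\dist(0,(E_X(u))^c) > r$ then the closed ball $B(0,r)$ avoids $(E_X(u))^c$; conversely, if $B(0,r) \subseteq E_X(u)$ and $(E_X(u))^c \neq \emptyset$, then, $(E_X(u))^c$ being closed and nonempty, the distance $\dist(0,(E_X(u))^c)$ is attained at some $x^\star \notin B(0,r)$, so the distance strictly exceeds $r$ (and it is $+\infty$ when $(E_X(u))^c = \emptyset$). Getting this strict-versus-non-strict inequality right, in sync with the closed/open bookkeeping of the erosion, is the step I expect to require the most care. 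Finally, erosion only shrinks a set, so $E_X(u)_{-r} \subseteq E_X(u)$ and the event above lies inside $\{0 \in E_X(u)\}$; hence
$$\P\big(\tilde R^{(u)}(0) > r \,\vert\, 0 \in E_X(u)\big) = \frac{\P\big(0 \in E_X(u)_{-r}\big)}{\P\big(0 \in E_X(u)\big)},$$
the denominator being positive because $u(0) < u_X^*(0)$, which is the standing hypothesis under which $R^{(u)}_0$ is defined (Definition~\ref{def:extremal_range}).

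It remains to express each coverage probability as a normalised expected volume. Since the Minkowski operations commute with translations, $(A+x)_{-r} = A_{-r}+x$, stationarity of $E_X(u)$ (Assumption~\ref{assumtion}) is inherited by $E_X(u)_{-r}$. For any stationary random (measurable) set $Z \subseteq \Reals^d$ and the compact set $T$ (so $0 < \lebesgue_d(T) < \infty$), Fubini's theorem yields
$$\E\big[\lebesgue_d(Z \cap T)\big] = \E\Big[\int_T \I{s \in Z}\,\d s\Big] = \int_T \P(s \in Z)\,\d s = \lebesgue_d(T)\,\P(0 \in Z).$$
Applying this with $Z = E_X(u)_{-r}$ and with $Z = E_X(u)$, the factor $\lebesgue_d(T)$ cancels in the ratio above --- which also shows that the right-hand side of \eqref{eqn:R_distn_func} does not depend on $T$ --- and combining with $\P(R^{(u)}_0 \leq r) = 1 - \P(R^{(u)}_0 > r)$ gives \eqref{eqn:R_distn_func}. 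The one point that still needs a word is the joint measurability of $(\omega,s) \mapsto \I{s \in E_X(u)_{-r}(\omega)}$ needed for Fubini, which follows from the equivalence $s \in E_X(u)_{-r}(\omega) \Leftrightarrow \dist\big(s,(E_X(u)(\omega))^c\big) > r$ together with the fact that $(E_X(u))^c$ is a random closed set under Assumption~\ref{assumtion}, so its distance function is jointly measurable.
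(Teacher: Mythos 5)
Your proposal is correct and follows essentially the same route as the paper's proof: identify $\{\tilde R^{(u)}(0)>r\}$ with the ball-inclusion/erosion event $\{B(0,r)\subseteq E_X(u)\}=\{0\in E_X(u)_{-r}\}$, note this event is contained in $\{X(0)>u(0)\}$, and then use stationarity of $E_X(u)$ together with Fubini to turn the coverage probabilities into normalised expected volumes over $T$. The only (harmless) difference is that you justify openness of $E_X(u)$ via closedness of its complement under the positive-reach hypothesis, whereas the paper's proof invokes continuity of $X$; your variant is in fact more faithful to Assumption~\ref{assumtion}, and your extra remarks on measurability and the positivity of the denominator are fine.
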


The proof of Proposition~\ref{prp:cdf} is provided in Appendix~\ref{ProofsProvidedResults}. The extremal range has close links with the \textit{spherical erosion function}   \citep{serra1984, ripley1988}, which describes the distribution function of the distance of a uniform  random point in a set to the set's boundary.   The interested reader is referred to Section 1.7.4 in \cite{chiu2013stochastic}.   Notice that volumes   of excursion sets and their erosion can be efficiently estimated with routine algorithms, such that Equation~\eqref{eqn:R_distn_func} can be used for estimating the distribution function of $R_0^{(u)}$ in the stationary setting by replacing expectations with empirical estimates. 

Next in Theorem \ref{thm:distribution_small_r}, we will show that under certain regularity conditions, a polynomial expression of the Lebesgue measure of an eroded set in terms of the associated $d$ and $(d-1)$-dimensional Lipschitz-Killing curvatures (see Definition~\ref{ass:regularity}) can be obtained as corollary to the well-known Steiner formula \citep[][Theorem~5.6]{federer1959}.      An important preliminary  property of the extremal range  is asserted by the following lemma, which we prove in Appendix~\ref{ProofsProvidedResults}.

\begin{lemm}\label{lem:continuity}
    Under Assumption~\ref{assumtion}, it holds that    $\P(R_0^{(u)} \leq r)$ is continuous in $r$, for $r>0$.
\end{lemm}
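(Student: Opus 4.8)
The plan is to combine Proposition~\ref{prp:cdf} with a geometric fact about distance functions to sets of positive reach. By Proposition~\ref{prp:cdf}, $\P(R_0^{(u)} \le r) = 1 - g_T(r)/g_T(0)$, where $g_T(r) := \E[\lebesgue_d(E_X(u)_{-r}\cap T)]$, for any compact $T$ with $\lebesgue_d(T) > 0$. By stationarity of $E_X(u)$ and $u(0) < u_X^*(0)$, the normalising constant $g_T(0) = \int_T \P(X(s) > u(s))\,\d s = \lebesgue_d(T)\,\P(X(0) > u(0))$ is strictly positive, so it suffices to show that $g_T$ is continuous on $(0,\infty)$. Writing $E_X(u)_{-r} = \{v : \dist(v, E_X(u)^c) > r\}$, which is legitimate since $E_X(u)^c$ is closed under Assumption~\ref{assumtion}, I observe that for each $\omega$ the map $r \mapsto \lebesgue_d(E_X(u)_{-r}\cap T)$ is non-increasing, bounded by $\lebesgue_d(T)$, and right-continuous; dominated convergence then makes $g_T$ right-continuous, with left-jump at any $r_0 > 0$ equal to $\E[\lebesgue_d(\{v\in T : \dist(v,E_X(u)^c) = r_0\})]$. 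By Tonelli and the stationarity of $E_X(u)$, this left-jump equals $\lebesgue_d(T)\,\P(\dist(0,E_X(u)^c) = r_0)$, so the lemma reduces to showing that $\P(\dist(0,E_X(u)^c) = r_0) = 0$ for every $r_0 > 0$.

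The geometric ingredient I would use is: if $A \subseteq \Reals^d$ is closed with positive reach $\rho$, then $\lebesgue_d(\{v : \dist(v,A) = r\}) = 0$ for \emph{every} $r > 0$, not only for $r < \rho$. For $r < \rho$ this is classical, since there $\dist(\cdot,A)$ is $C^{1,1}$ with unit-norm gradient and the level set is a $C^1$ hypersurface. For arbitrary $r$, I would decompose $\{v : \dist(v,A) = r\}$ into the set of $v$ whose nearest point in $A$ is not unique — which is $\lebesgue_d$-null for every closed $A$ — and its complement $N_r$; on $N_r$ the map $v \mapsto \pi_A(v) + \tfrac{\rho}{2}\tfrac{v - \pi_A(v)}{r}$ takes values in the null set $\{v : \dist(v,A) = \rho/2\}$ and is injective with Lipschitz inverse (the inverse is an affine combination of the identity and the foot-point map $\pi_A$, which is locally Lipschitz on $\{0 < \dist(\cdot,A) < \rho\}$), so $N_r$ is the Lipschitz image of a null set and hence null.

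Finally I would remove the dependence on $T$. For $v$ in the interior of $T$ with $\dist(v,\partial T) > r_0$, every nearest point of $v$ in $E_X(u)^c$ lies in $T$, so $\dist(v,E_X(u)^c) = r_0$ forces $\dist(v, E_X(u)^c \cap T) = r_0$; combining this with the geometric lemma applied to $A = E_X(u;\omega)^c \cap T$ (which has positive reach a.s.\ by Assumption~\ref{assumtion}) gives
$$\P(\dist(0,E_X(u)^c) = r_0) = \frac{1}{\lebesgue_d(T)}\,\E\big[\lebesgue_d(\{v\in T : \dist(v,E_X(u)^c) = r_0\})\big] \le \frac{\lebesgue_d\big(\{v\in T : \dist(v,\partial T)\le r_0\}\big)}{\lebesgue_d(T)}.$$
This bound holds for every admissible window; taking $T = [-n,n]^d$ and letting $n \to \infty$ sends the right-hand side to $0$, so $\P(\dist(0,E_X(u)^c) = r_0) = 0$, as required.

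I expect the main obstacle to be the geometric lemma for radii exceeding the reach, where the nearest-point projection $\pi_A$ is multivalued and the usual tube/Steiner description of $\{\dist(\cdot,A) = r\}$ breaks down; the trick of injecting into a small level set on which everything is regular is presumably the idea drawn from the cited Mathematics Stack Exchange discussion. A minor point is that Assumption~\ref{assumtion} should be understood as applying to each window $[-n,n]^d$ in the exhausting sequence, as is implicit already in Definition~\ref{ass:regularity}.
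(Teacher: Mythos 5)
Your argument is correct, but it follows a genuinely different route from the paper. After the common reduction (jump of the distribution function at $r_0$ equals, via Fubini--Tonelli and stationarity, the normalized expected volume of the level set $\{v:\dist(v,E_X(u)^c)=r_0\}$), you prove that this level set is $\lebesgue_d$-null by invoking the positive-reach hypothesis: Federer's regularity of $\dist(\cdot,A)$ below the reach, nullity of the set of points with non-unique foot point, and your injection of $N_r$ into the regular level $\{\dist(\cdot,A)=\rho/2\}$ with a Lipschitz inverse built from the foot-point map. Because Assumption~\ref{assumtion} only controls the reach of $E_X(u)^c\cap T$ and not of $E_X(u)^c$ itself, you then need the boundary-layer estimate and the exhaustion $T=[-n,n]^d$, $n\to\infty$, plus the reading of the assumption as holding along that exhausting sequence. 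The paper avoids all of this: it shows directly that for \emph{any} nonempty set $S$ and any $r>0$, the level set $\{s:\dist(s,S)=r\}$ is Lebesgue-null, by a two-line Lebesgue density argument (near any point of the level set there is a ball of half the radius on which the distance is strictly smaller than $r$, so the density is at most $1-2^{-d}$, contradicting the Lebesgue differentiation theorem on a set of positive measure), and then concludes with a single window $T$ and stationarity. So the paper's proof uses neither positive reach nor window exhaustion, while your proof buys nothing extra for this lemma but does illustrate how the tube structure can substitute for the density argument when the reach is available; your Lipschitz-injection trick for radii exceeding the reach and the $\partial T$-layer bound are both sound, and the only caveats are minor (the detour through Proposition~\ref{prp:cdf} is unnecessary since the left jump of a distribution function at $r_0$ is $\P(R_0^{(u)}=r_0)$ directly, and you should note the trivial case $E_X(u)^c\cap T=\emptyset$).
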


The main result of this section is the following first-order approximation of the distribution function of the extremal range.   

\begin{theo}\label{thm:distribution_small_r}
Under Assumption~\ref{assumtion}, for $r>0$,  it holds that  \begin{equation}\label{eq:er-asymptotics-for-small-r}
    \lim_{r\to 0}\frac{\P(R_0^{(u)} \leq r)}r = \frac{2 C_{d-1}^*(E_X(u))}{C_{d}^*(E_X(u))}.
\end{equation} 
\end{theo}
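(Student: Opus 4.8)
The plan is to feed the exact distributional identity of Proposition~\ref{prp:cdf} into a Steiner-type expansion of the volume of the eroded excursion set. Fix any compact convex $T\subset\Reals^d$ with $\lebesgue_d(T)>0$ (e.g.\ a closed ball), so that $\partial T$ is $(d-1)$-rectifiable with finite surface area; recall that the left-hand side of \eqref{eq:er-asymptotics-for-small-r} does not depend on this choice. Since $E_X(u)^c$ is almost surely closed under Assumption~\ref{assumtion}, for $r>0$ one has $E_X(u)_{-r}=\{s\in\Reals^d:\dist(s,E_X(u)^c)>r\}\subseteq E_X(u)$. Writing $V_r:=\lebesgue_d\big((E_X(u)\setminus E_X(u)_{-r})\cap T\big)$, Proposition~\ref{prp:cdf} then reads
\[
\P\big(R_0^{(u)}\le r\big)=\frac{\E[V_r]}{\E\big[\lebesgue_d(E_X(u)\cap T)\big]}=\frac{\E[V_r]}{\lebesgue_d(T)\,C_{d}^*(E_X(u))},
\]
where the second equality uses stationarity of $E_X(u)$ together with $\E[\lebesgue_d(E_X(u)\cap T)]=\lebesgue_d(T)\,\P(X(0)>u(0))=\lebesgue_d(T)\,C_d^*(E_X(u))$ (Definition~\ref{ass:regularity} and the remark following it). It therefore suffices to prove $\E[V_r]/r\to 2\,C_{d-1}^*(E_X(u))\,\lebesgue_d(T)$ as $r\to0$.

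Observe that $V_r=\lebesgue_d\big(\{s\in E_X(u)\cap T:\dist(s,\partial E_X(u))\le r\}\big)$ is the volume of the one-sided $r$-collar of the level surface $\partial E_X(u)$, taken on the $E_X(u)$ side and intersected with $T$. Fixing a realization, the set $E_X(u)^c\cap T$ has positive reach (Assumption~\ref{assumtion}); by the Steiner formula for sets of positive reach \citep[][Theorem~5.6]{federer1959}, equivalently by the coarea formula for $s\mapsto\dist(s,E_X(u)^c)$, the collar volume expands as $V_r=r\,\lebesgue_{d-1}\big(\partial E_X(u)\cap T\big)+o(r)$ as $r\to0$, the subsequent coefficients being finite curvature integrals. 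The boundary $\partial T$ enters only at order $o(r)$: the part of the collar lying within distance $r$ of $\partial T$ has volume at most a bounded multiple of $r\,\lebesgue_{d-1}\big(\partial E_X(u)\cap(\partial T)_r\big)$, while $\lebesgue_{d-1}\big(\partial E_X(u)\cap(\partial T)_r\big)\downarrow\lebesgue_{d-1}\big(\partial E_X(u)\cap\partial T\big)=0$ almost surely, the last equality because $D\mapsto\E[\lebesgue_{d-1}(\partial E_X(u)\cap D)]$ is, by stationarity, a translation-invariant locally finite Borel measure, hence a multiple of $\lebesgue_d$ and therefore null on $\partial T$. Thus $V_r/r\to\lebesgue_{d-1}(\partial E_X(u)\cap T)$ almost surely.

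To identify the constant, the same translation invariance gives $\E[\lebesgue_{d-1}(\partial E_X(u)\cap T)]=\gamma\,\lebesgue_d(T)$ for some $\gamma\ge0$; comparing with \eqref{haus}, and using that replacing $\partial(E_X(u)\cap nT)$ by $\partial E_X(u)\cap nT$ changes the numerator there only by an $O(n^{d-1})=o(n^d)$ term, yields $\gamma=2\,C_{d-1}^*(E_X(u))$ (equivalently, this is the first-order Steiner relation $C_d^*(E_X(u)_{-r})=C_d^*(E_X(u))-2\,r\,C_{d-1}^*(E_X(u))+o(r)$). It remains to interchange limit and expectation. Fatou's lemma immediately gives $\liminf_{r\to0}\E[V_r]/r\ge 2\,C_{d-1}^*(E_X(u))\,\lebesgue_d(T)$; for the matching upper bound one dominates $V_r/r$ by truncating on the almost surely positive reach $\rho:=\reach(E_X(u)^c\cap T)$: on $\{\rho\ge\varepsilon\}$ and for $r<\varepsilon$ the tube estimates for positive-reach sets give $V_r/r\le c(\varepsilon)\,\lebesgue_{d-1}\big(\partial E_X(u)\cap T_\varepsilon\big)$, which is integrable since its expectation equals $2\,C_{d-1}^*(E_X(u))\,\lebesgue_d(T_\varepsilon)<\infty$; applying the reverse Fatou lemma on $\{\rho\ge\varepsilon\}$ and then letting $\varepsilon\to0$ (so that $\P(\rho<\varepsilon)\to0$) closes the argument. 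Dividing by the denominator computed above yields \eqref{eq:er-asymptotics-for-small-r}; the continuity of $r\mapsto\P(R_0^{(u)}\le r)$ from Lemma~\ref{lem:continuity} can be invoked to regularize the behaviour of this ratio near $r=0$.

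The main obstacle is precisely this interchange of $\lim_{r\to0}$ with $\E[\cdot]$: the $o(r)$ remainder in the collar expansion, and the accuracy of the one-sided Minkowski-content approximation, are controlled only through the reach of $E_X(u)^c\cap T$, which is almost surely positive but need not be bounded below, so no single integrable dominating function is available off the shelf and the truncation argument above (or a uniform-integrability argument of the same flavour) is required to control the small-reach contribution. A secondary, more routine matter is the bookkeeping of edge effects near $\partial T$, handled above via $\lebesgue_{d-1}(\partial E_X(u)\cap\partial T)=0$ almost surely.
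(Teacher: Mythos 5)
Your proposal takes essentially the same route as the paper's proof: reduce $\P(R_0^{(u)}\le r)$ to the expected volume of the one-sided $r$-collar of $E_X(u)^c$ inside a compact window (Proposition~\ref{prp:cdf} plus stationarity), expand that volume to first order in $r$ via the Steiner/tube formula for the positive-reach set $E_X(u)^c\cap T$, and identify the linear coefficient with the surface-area density to get $2C^*_{d-1}(E_X(u))/C^*_d(E_X(u))$. The only genuinely different bookkeeping is the edge effect: you keep $T$ fixed and kill the window boundary by noting that $D\mapsto\E[\lebesgue_{d-1}(\partial E_X(u)\cap D)]$ is a translation-invariant, locally finite measure, hence proportional to $\lebesgue_d$ and null on $\partial T$ (the same observation gives your identification $\gamma=2C^*_{d-1}$), whereas the paper works on growing cubes $T=[-n,n]^d$, bounds the discrepancy between the global and the localized collar by $\lebesgue_d(T_r\setminus T_{-r})$, and sends $n\to\infty$ at the end. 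Both devices are sound and deliver the same thing.

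The step that does not hold as written is the interchange of $\lim_{r\to 0}$ with $\E$, which you yourself single out as the crux. Fatou gives the lower bound, but your upper bound only dominates $V_r/r$ on the event $\{\rho\ge\varepsilon\}$, where $\rho$ is the reach of $E_X(u)^c\cap T$; on the complementary event you offer nothing beyond $\P(\rho<\varepsilon)\to 0$, and that is not sufficient: there the only pointwise bound available is $V_r/r\le\lebesgue_d(T)/r$, so $\E[V_r/r;\,\rho<\varepsilon]$ could a priori stay bounded away from zero (or diverge) along $r\to 0$ for each fixed $\varepsilon$, unless one first establishes uniform integrability of the family $\{V_r/r\}_{r>0}$ --- which is precisely what the truncation was supposed to deliver, so the argument is circular at this point. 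Closing it needs a quantitative collar estimate valid on the small-reach event, or a moment-type condition on surface functionals beyond what Assumption~\ref{assumtion} states. In fairness, the paper's own proof is no more explicit here: it takes expectations of the almost-sure expansion $\lebesgue_d\big((E_X(u)^c\cap T)_r\big)-\lebesgue_d\big(E_X(u)^c\cap T\big)=r\,\lebesgue_{d-1}\big(\partial(E_X(u)^c\cap T)\big)+\mathcal O(r^2)$ and silently treats the expected remainder as $\mathcal O(r^2)$, which presupposes the same uniform control. So your write-up reproduces the published argument in substance; the one step you commendably try to make rigorous is the one step that remains genuinely unproved in your version, and it should either be proved or explicitly flagged as an additional assumption.
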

The proof of Theorem~\ref{thm:distribution_small_r} can be found in Appendix~\ref{ProofsProvidedResults}.
Theorem~\ref{thm:distribution_small_r} shows that the distribution of the extremal range follows a first-order Taylor expansion for positive values of the radius $r$ near 0. Moreover, the linear coefficient is provided by the limit on the right-hand side of Equation~\eqref{eq:er-asymptotics-for-small-r}. By studying how this coefficient behaves for large thresholds, we gain insight about the spatial extent of high threshold exceedances. This point will be explored in the next section.

\section{Asymptotics for high thresholds and small distances}\label{sec:model}

We study the asymptotic behavior of the extremal range as the threshold function $u$ tends to the location-wise upper endpoint of the distribution of $X$ everywhere in space (see Definition \ref{upperBound}).
By studying the extremal range, we aim to capture information about the dependence structure of the random field $X$. Therefore, we use the threshold function $u_p:\Reals^{d}\to \Reals$ defined below as a location-wise quantile, such that it naturally adapts to the margins of the random field $X$, which is not necessarily stationary in our setting.

\begin{defi}\label{def:u_p}
    For $p \in (0,1)$ and a random variable $Y:\Omega\to\Reals$, let $q_{p}(Y)\in\Reals$ denote the $p$-quantile of $Y$, \textit{i.e.}, $q_{p}(Y) := \inf\{r\in\Reals : \P(Y \leq r) \geq p\}$. Now, define the adaptive threshold $u_p$ by the mapping $u_p(s) := q_p(X(s))$, for $s\in \Reals^{d}$.
\end{defi}

Theorem~\ref{thm:distribution_small_r} allows us to study how the extremal range decreases as the considered threshold increases, \textit{i.e.}, as $p\to 1$. This important result is summarized in the following corollary.

\begin{coro}\label{cor:scaling_as_p_to_infty}
Suppose that there exists $p_0 \in (0,1)$ such that for all $p\in(p_0,1)$, the random field $X$ paired with the threshold function $u_p$  satisfy Assumption~\ref{assumtion}. 
  Then a function $g:(0,1)\to \Reals$  satisfies
    \begin{equation}\label{eqn:g_asymptotic_equivalence}
        \lim_{p\to 1} g(p)\,\frac{C_{d}^*(E_X(u_{p}))}{2C_{d-1}^*(E_X(u_{p}))} = \frac 1K,
    \end{equation}
    for some $K\in\Reals^+$, if and only if 
\begin{equation}\label{eqn:lim_p_lim_r}
        \lim_{p\to1}\lim_{r\to 0}\frac{\P(g(p)R_0^{(u_p)} \leq r)}r = K.
    \end{equation}
\end{coro}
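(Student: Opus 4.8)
The plan is to show that Corollary~\ref{cor:scaling_as_p_to_infty} follows almost immediately from Theorem~\ref{thm:distribution_small_r} once we understand how the inner limit $\lim_{r\to0}\P(g(p)R_0^{(u_p)}\leq r)/r$ relates to the linear coefficient already identified in \eqref{eq:er-asymptotics-for-small-r}. First I would fix $p\in(p_0,1)$ and compute the inner limit. For a fixed positive scalar $c=g(p)$ (note that $g(p)$ must be shown, or assumed, to be positive for the statement to be meaningful; if $g(p)\le 0$ the probability is identically $0$ or $1$ and the left side of \eqref{eqn:g_asymptotic_equivalence} forces $K$ to be degenerate, so WLOG $g(p)>0$ for $p$ near $1$), we have $\P(cR_0^{(u_p)}\leq r)=\P(R_0^{(u_p)}\leq r/c)$, hence
\begin{equation*}
\lim_{r\to0}\frac{\P(g(p)R_0^{(u_p)}\leq r)}{r}=\frac1{g(p)}\lim_{r'\to0}\frac{\P(R_0^{(u_p)}\leq r')}{r'}=\frac1{g(p)}\cdot\frac{2C_{d-1}^*(E_X(u_p))}{C_d^*(E_X(u_p))},
\end{equation*}
where the last equality is exactly Theorem~\ref{thm:distribution_small_r} applied to the threshold function $u_p$, which is legitimate since $X$ paired with $u_p$ satisfies Assumption~\ref{assumtion} for all $p\in(p_0,1)$ by hypothesis.

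Denote this inner limit by $h(p):=\dfrac{1}{g(p)}\cdot\dfrac{2C_{d-1}^*(E_X(u_p))}{C_d^*(E_X(u_p))}$, which is a well-defined finite nonnegative quantity for each $p\in(p_0,1)$. The statement of the corollary is then the trivial equivalence
\begin{equation*}
\lim_{p\to1}\frac{1}{h(p)}=K\iff\lim_{p\to1}h(p)=\frac1K,
\end{equation*}
since \eqref{eqn:g_asymptotic_equivalence} reads $\lim_{p\to1}g(p)\dfrac{C_d^*(E_X(u_p))}{2C_{d-1}^*(E_X(u_p))}=\lim_{p\to1}\dfrac1{h(p)}=\dfrac1K$ and \eqref{eqn:lim_p_lim_r} reads $\lim_{p\to1}h(p)=K$. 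Wait — one must be careful: the corollary states \eqref{eqn:g_asymptotic_equivalence} gives $1/K$ and \eqref{eqn:lim_p_lim_r} gives $K$, so in the notation above \eqref{eqn:g_asymptotic_equivalence} is $\lim 1/h(p)=1/K$ and \eqref{eqn:lim_p_lim_r} is $\lim h(p)=K$; these are equivalent because $x\mapsto 1/x$ is a homeomorphism of $(0,\infty)$ and $K\in\Reals^+$ is assumed strictly positive and finite. So the reciprocal-limit step is valid provided we know $h(p)$ stays bounded away from $0$ and $\infty$ near $p=1$, which is precisely what either hypothesis provides.

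The only genuine content is the interchange justification in the first display, i.e., that rescaling the argument of the CDF by a fixed positive constant $g(p)$ simply rescales the derivative-at-zero by $1/g(p)$; this is elementary but relies on the fact that the limit $\lim_{r\to0}\P(R_0^{(u_p)}\leq r)/r$ exists (which is Theorem~\ref{thm:distribution_small_r}), so that the substitution $r'=r/g(p)$ commutes with taking the limit. I expect the main (mild) obstacle to be bookkeeping around the sign and positivity of $g(p)$ and the strict positivity of $K$, so that the map $x\mapsto 1/x$ can be applied to both sides of the limit; once that is settled, the proof is a two-line substitution followed by invoking Theorem~\ref{thm:distribution_small_r}. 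I would write the argument in exactly that order: (i) reduce the inner limit in \eqref{eqn:lim_p_lim_r} via the substitution and Theorem~\ref{thm:distribution_small_r}; (ii) observe that \eqref{eqn:lim_p_lim_r} and \eqref{eqn:g_asymptotic_equivalence} are reciprocals of one another; (iii) conclude by continuity of inversion on $\Reals^+$.
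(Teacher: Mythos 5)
Your proposal is correct and follows essentially the same route as the paper: apply Theorem~\ref{thm:distribution_small_r} to the threshold $u_p$ (absorbing the factor $g(p)$ by the substitution $r'=r/g(p)$) to identify the inner limit as $\frac{2C_{d-1}^*(E_X(u_p))}{g(p)\,C_d^*(E_X(u_p))}$, then send $p\to 1$ and use that the two conditions are reciprocals on $\Reals^+$. Your extra bookkeeping about the positivity of $g(p)$ and the continuity of inversion simply makes explicit what the paper's terse ``sending $p\to 1$'' step leaves implicit.
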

\begin{proof}
    Theorem~\ref{thm:distribution_small_r} tells us that for any $p$,
    $$\lim_{r\to 0}\frac{\P(g(p)R_0^{(u_p)} \leq r)}r = \frac{2C_{d-1}^*(E_X(u_{p}))}{g(p)\, C_d^*(E_X(u_{p}))}.$$
    Sending $p\to 1$ yields the desired result.
\end{proof}
As we will see in the following, the asymptotics of $g(p)$ can be derived for random fields of \textit{Gaussian type} from the Gaussian Kinematic Formula (see  Theorem~15.9.5 in \citep{adler2007}). For instance, Gaussian random fields satisfy Equations~\eqref{eqn:g_asymptotic_equivalence} and~\eqref{eqn:lim_p_lim_r} when $g(p) \asymp \sqrt{-\log(1-p)}$, where the binary operator $\asymp$ indicates that the ratio of the two expressions tends to a constant as $p\to 1$. Likewise, for so-called \textit{regularly varying} random fields, we will show that $g(p)\asymp 1$ is necessary and sufficient for Equations~\eqref{eqn:g_asymptotic_equivalence} and~\eqref{eqn:lim_p_lim_r} to hold (the details of regularly varying fields will be discussed in Section~\ref{sec:model_nondegen_rv}).

An interpretation of Corollary~\ref{cor:scaling_as_p_to_infty} is that the probability density function of $g(p)R_0^{(u_p)}$ just to the right of 0 approaches 1 if and only if $g(p)$ is asymptotically equivalent to $2C_{d-1}^*(E_X(u_{p})) / C_{d}^*(E_X(u_{p}))$ as $p\to 1$. In this sense, Corollary~\ref{cor:scaling_as_p_to_infty} shows how $R_0^{(u_p)}$ scales as $p\to1$. We are not able to use Corollary~\ref{cor:scaling_as_p_to_infty} to  {more generally} establish a non-degenerate limit distribution of $\big(2C_{d-1}^*(E_X(u_{p}))/C_{d}^*(E_X(u_{p}))\big)R_0^{(u_p)}$ as $p\to 1$; it is not always possible to exchange the order of the limits in Equation~\eqref{eqn:lim_p_lim_r}. A counterexample in dimension $d=2$ is provided in Appendix~\ref{sec:appendix_counter}.
 
\subsection{Non-degenerate limit distributions of the extremal range}\label{sec:model_nondegen}

Here, we study certain cases of widely used spatial random field models where the extremal range is known to have a non-degenerate limit distribution at high thresholds \textit{after appropriate rescaling}.  We give expressions for the rates of change of the extremal range at extremely large conditioning thresholds.   
The random fields that we will consider in
% Sections \ref{sec:model_nondegen_gaussian} and~\ref{sec:model_nondegen_rv}
this section
are stationary, so we choose a threshold function $u$ that is constant throughout space. To ease notation, we write $u$ to denote both the constant mapping $u:\Reals^{d}\to\Reals$ and its image in $\Reals$.

\subsubsection{Gaussian random fields}\label{sec:model_nondegen_gaussian}

For a smooth, stationary Gaussian process $Y$ on $\Reals$, if one is to condition on the event $\{X(0) > u\}$ for some large threshold $u\in\Reals$, one can show using tools developed in \cite{kac1959} that the connected component of the excursion set containing $0$ is a random interval with expected length asymptotically equivalent to $1/u$. By analogy, after appropriately rescaling in the spatial dimension, one finds that the limit process is a random parabola with deterministic shape. These insights are formally generalized for the $d$-dimensional case in the following proposition formulated for smooth standard Gaussian fields, for which a proof is given in Appendix~\ref{ProofsProvidedResults}.

\begin{proposition}\label{prp:gaussian_nondegen_limiting}
    Suppose that $X$ is a stationary, isotropic, centered Gaussian random field on $\Reals^{d}$ with covariance function
    \begin{equation}\label{eqn:gaussian_correlation}
        \rho(h) = 1 - \frac{\lambda}{2}  \|h  \|^2 + o(  \|h  \|^2),\qquad \lambda> 0,
    \end{equation}
    for $h$ in a neighbourhood of 0.
    Then $\P(uR^{(u)}_0 \in \cdot )$ converges to a non-degenerate probability distribution, as $u\to \infty$.
\end{proposition}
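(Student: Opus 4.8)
The plan is to reduce the statement to a computation with the Gaussian Kinematic Formula (GKF) combined with Proposition~\ref{prp:cdf} and Theorem~\ref{thm:distribution_small_r}. First I would recall that for a smooth, stationary, isotropic, centered Gaussian field with unit variance and second spectral moment $\lambda$, the GKF (Theorem~15.9.5 in \citep{adler2007}) gives, for a constant threshold $u$,
\begin{equation*}
C_d^*(E_X(u)) = 1-\Phi(u), \qquad C_{d-1}^*(E_X(u)) = c_d\,\sqrt{\lambda}\,\varphi(u),
\end{equation*}
where $\Phi,\varphi$ are the standard normal cdf and pdf and $c_d$ is an explicit dimensional constant. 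Using the classical Mills-ratio asymptotics $1-\Phi(u)\sim \varphi(u)/u$ as $u\to\infty$, the coefficient appearing in Theorem~\ref{thm:distribution_small_r} satisfies
\begin{equation*}
\frac{2C_{d-1}^*(E_X(u))}{C_d^*(E_X(u))} = \frac{2 c_d \sqrt{\lambda}\,\varphi(u)}{1-\Phi(u)} \sim 2 c_d \sqrt{\lambda}\, u, \qquad u\to\infty.
\end{equation*}
This already identifies the correct scaling: $g(u)=u$ makes the linear coefficient of the cdf of $uR_0^{(u)}$ near $0$ converge (to $2c_d\sqrt\lambda$), which is consistent with Corollary~\ref{cor:scaling_as_p_to_infty} and the announced $g(p)\asymp\sqrt{-\log(1-p)}$ (since $u_p\asymp\sqrt{-\log(1-p)}$). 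But Corollary~\ref{cor:scaling_as_p_to_infty} only controls the behaviour near $r=0$ after the limits are interchanged, and the counterexample in Appendix~\ref{sec:appendix_counter} shows this interchange is not automatic; so the real work is to establish convergence of the \emph{full} distribution of $uR_0^{(u)}$, not just its derivative at the origin.

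For that, I would pass through the localized picture around the conditioning point. Conditioning on $\{X(0)>u\}$ and writing $X(0)=u+Y/u$ with $Y$ asymptotically unit-exponential (the standard Gaussian peak-over-threshold representation), the Slepian-type / Gaussian conditional expansion gives, for the rescaled field,
\begin{equation*}
u\big(X(s/u) - u\big) \;\Longrightarrow\; Y - \tfrac{\lambda}{2}\|s\|^2 + W(s),
\end{equation*}
where the limiting fluctuation term $W$ vanishes at $s=0$ and is $o(\|s\|^2)$-negligible relative to the deterministic parabola in the relevant regime — in fact for the event $\{B(0,r/u)\subset E_X(u)\}$ one needs $u(X(s/u)-u)>0$ for all $\|s\|\le r$, i.e. the rescaled excursion set near the origin. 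The limit object is the (random height, deterministic curvature) paraboloid $\{s: \|s\|^2 < 2Y/\lambda\}$, which is a ball of radius $\sqrt{2Y/\lambda}$. Hence one expects $uR_0^{(u)}\Rightarrow \sqrt{2Y/\lambda}$ with $Y\sim\mathrm{Exp}(1)$, a non-degenerate law. The precise way I would make this rigorous is via Proposition~\ref{prp:cdf}: for a fixed compact $T$,
\begin{equation*}
\P\big(uR_0^{(u)}\le r\big) = 1 - \frac{\E[\lebesgue_d(E_X(u)_{-r/u}\cap T)]}{\E[\lebesgue_d(E_X(u)\cap T)]},
\end{equation*}
rescale $T\mapsto uT$ (allowed since the curvature densities are translation/scale covariant and $T$ is arbitrary), and show the rescaled numerator and denominator, divided by the common normalization $(1-\Phi(u))$, converge to $\E[\lebesgue_d(\text{eroded limit paraboloid})]$ and $\E[\lebesgue_d(\text{limit paraboloid})]$ respectively. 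The ratio then converges to $\P(\sqrt{2Y/\lambda} > r)/1$... wait, more precisely to the tail of $\sqrt{2Y/\lambda}$, giving a proper nondegenerate cdf on $[0,\infty)$.

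The main obstacle, and where I would spend most of the effort, is making the local limit of the conditioned excursion geometry rigorous \emph{uniformly enough} to justify the interchange of limits — equivalently, to pass the $u\to\infty$ limit inside the expectations in the displayed ratio. Concretely: (i) establish the Gaussian conditional CLT for $u(X(\cdot/u)-u)$ on compacts in the $C^0$ (ideally $C^1$) topology, using the expansion of the conditional mean and covariance of a Gaussian field given a linear functional, and the assumed covariance expansion \eqref{eqn:gaussian_correlation}; (ii) upgrade convergence of the field to convergence of the (eroded) excursion volumes — this requires a non-degeneracy/no-critical-value argument so that the boundary of the limit set has measure zero and small perturbations of the field do not create large changes in volume (a continuous-mapping argument in the Hausdorff/symmetric-difference sense, leaning on the positive-reach hypothesis of Assumption~\ref{assumtion} to rule out pathological boundary behaviour); and (iii) obtain a dominating bound — a uniform-in-$u$ integrable envelope for $\lebesgue_d(E_X(u)_{-r/u}\cap uT)/(1-\Phi(u))$ — to apply dominated convergence; a Gaussian concentration / Borell–TIS bound on $\sup_{s\in uT} X(s/u)$ together with the known exact order $(1-\Phi(u))$ of $C_d^*$ should furnish this. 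The edge effects in $uT$ are asymptotically negligible because the conditioned mass concentrates at $O(1)$-distance (in rescaled coordinates) from the origin, so I would fix $T$ once and let the rescaling do the work rather than chasing boundary corrections. Once (i)–(iii) are in place, the non-degeneracy of the limit is immediate from $Y\sim\mathrm{Exp}(1)$ and $\lambda>0$, completing the proof.
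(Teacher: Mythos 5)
Your overall strategy is the same as the paper's: rescale space by $1/u$, obtain a local limit of $u\big(X(\cdot/u)-u\big)$ conditioned on $X(0)>u$, and read off the limit of $uR_0^{(u)}$ from the limiting excursion geometry (the paper does this by applying the Kac--Slepian horizontal-window result along one-dimensional sections of $X$). However, there is a genuine error in your key expansion. You write $u\big(X(s/u)-u\big)\Rightarrow Y-\tfrac{\lambda}{2}\|s\|^2+W(s)$ and declare the fluctuation $W$ negligible. It is not: from $X(s/u)\approx X(0)+\langle\nabla X(0),s\rangle/u+\tfrac{1}{2u^2}s^{\top}\nabla^2X(0)\,s$, the rescaled field picks up the term $\langle\nabla X(0),s\rangle$, and $\nabla X(0)\sim N(0,\lambda I_d)$ is independent of $X(0)$ and of order one at the relevant scale $\|s\|=O(1)$ — it is neither killed by the conditioning nor dominated by the parabola. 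The paper's limit accordingly is $-\tfrac{\lambda}{2}\|t\|^2+\langle\widetilde{\xi_\lambda},t\rangle+\xi$ with $\xi\sim\mathrm{Exp}(1)$, so the limiting excursion region is a random ball containing but \emph{not} centered at the origin (center $\widetilde{\xi_\lambda}/\lambda$, radius $\sqrt{2\xi/\lambda+\|\widetilde{\xi_\lambda}\|^2/\lambda^2}$), and the limit of $uR_0^{(u)}$ is the distance from the origin to its boundary, $\sqrt{2\xi/\lambda+\|\widetilde{\xi_\lambda}\|^2/\lambda^2}-\|\widetilde{\xi_\lambda}\|/\lambda$, not $\sqrt{2Y/\lambda}$. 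As written, your argument targets a false limit law; the non-degeneracy conclusion survives only after the gradient term is restored, since you condition on an exceedance at a generic point of the excursion component, not on the origin being a local maximum.

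Two secondary remarks. First, the GKF/Mills-ratio preamble correctly identifies the scaling $g(u)=u$ and is consistent with the paper's discussion, but it plays no role in the proof itself. Second, your route through Proposition~\ref{prp:cdf} with $T\mapsto uT$, a continuous-mapping step for eroded volumes, and a dominated-convergence envelope is considerably heavier than the paper's argument, which passes directly from finite-dimensional convergence of the rescaled conditioned field to convergence of the rescaled excursion component to a random hypersphere containing the origin; if you pursue your route, the work items (i)--(iii) you list are indeed the right ones, but they must be carried out for the correct limit field including the linear drift term.
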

Notice that, for a  $d$-dimensional   random field $X$ as described in Proposition~\ref{prp:gaussian_nondegen_limiting}, the expressions for $C_{d-1}^*(E_X(u))$ and $C_{d}^*(E_X(u))$ are computed in \cite{bierme2019} using the Gaussian Kinematic Formula \citep[][Theorem~15.9.5]{adler2007}. The interested reader is also referred to Exercises 6.2.c and 6.3 in \cite{AW09}. Then we have,   
$$\frac{2C_{d-1}^*(E_X(u))}{C_d^*(E_X(u))} = \sqrt{\frac{\lambda}{\pi}} \, \frac{e^{-u^2/2}}{1-\Phi(u)}   \,\frac{\Gamma(\frac{d+1}{2})}{\Gamma(\frac{d}{2})},
$$

where $\Phi$ denotes the standard Gaussian cumulative distribution function, and $\lambda$ is as in~Equation \eqref{eqn:gaussian_correlation}. Therefore, by the results of \cite{gordon1941}, concerning the Mill's ratio of the Gaussian  distribution,
$$\lim_{u \to \infty} \frac{2C_{d-1}^*(E_X(u))}{C_d^*(E_X(u))}\times \frac 1u =  \frac{\sqrt{2\lambda}\,\Gamma(\frac{d+1}{2})}{\Gamma(\frac{d}{2})}.$$

%\ele{Old $d=2$:
%$$\frac{\sqrt{\lambda}e^{-u^2/2}}{2(1-\Phi(u))} \sim \sqrt{\frac{\pi\lambda}{2}}u,$$
%a revoir en dimenison $d$ general:} 
Therefore, by using Corollary~\ref{cor:scaling_as_p_to_infty} with $g(p)=u_p(0)$, the probability density of $uR_0^{(u)}$ just to the right of $0$ approaches {$\frac{\sqrt{2\lambda}\,\Gamma(\frac{d+1}{2})}{\Gamma(\frac{d}{2})}$}
%$\sqrt{\pi\lambda/2}$, 
as $u\to\infty$.
If one were to show in addition the uniform convergence of the density of the extremal range, one may conclude that {$\frac{\sqrt{2\lambda}\,\Gamma(\frac{d+1}{2})}{\Gamma(\frac{d}{2})}$}
%$\sqrt{\pi\lambda/2}$ 
is the limiting value as $r\to0$ of the limiting density, as $u\to \infty$.  Possible  Gaussian covariance functions satisfying Equation~\eqref{eqn:gaussian_correlation} are given below.

\begin{exam}
\label{example1}
A stationary, isotropic Gaussian random field with unit variance and $C^1$-smooth sample paths has the covariance function in~\eqref{eqn:gaussian_correlation} with $\lambda$ equal to its second spectral moment; see \cite[page~151]{Leadbetter1983} and \cite{cambanis1973}.
A particular case is given by the  isotropic, Mat{\'e}rn covariance function
\begin{equation}\label{Matern}
    \rho(h) = \frac{2^{1-\nu}}{\Gamma(\nu)}\left(\frac{\sqrt{2\nu}\norm{h}}{l}\right)^\nu K_\nu\left(\frac{\sqrt{2\nu}\norm{h}}{l}\right), \quad \nu,l>0,
\end{equation}
with $K_\nu$ denoting the modified Bessel function of the second kind, satisfies~\eqref{eqn:gaussian_correlation} for $\nu > 1$ and
$\lambda = \frac{\nu}{l^2(\nu - 1)}.$
\end{exam} 
In practice, a useful approximation of the distribution function of the suitable scaled $R_0^{(u)}$, for large $u$ and small $r$, for $d$-dimensional  Gaussian random fields is given by  
$$
\P(u R_0^{(u)}\leq r) \approx 
{\frac{\sqrt{2\lambda}\,\Gamma(\frac{d+1}{2})}{\Gamma(\frac{d}{2})}} \,r,
$$

%$$
%\P(u R_0^{(u)}\leq r) \approx \sqrt{\frac{\pi\lambda}2}r,
%$$
where an estimate $\hat{\lambda}$ of the second spectral moment $\lambda$ based on a parametric covariance function $\rho(h)$  {or on the Euler-Poincar{\'e} 
 characteristic of  the excursion set  (see, e.g., \cite{bierme2019}, Proposition 2.6)} could be plugged in to obtain an estimate for spatial data corresponding to relatively smooth spatial surfaces.   

\subsubsection{Regularly varying fields}\label{sec:model_nondegen_rv}
 
 {Regular variation is a key concept to describe asymptotic behavior in tails of probability distributions \citep{Bingham1989}. Here, we recall the core elements of the theory of regularly varying random fields \citep{Hult2005,Hult2006}, and the related $\ell$-Pareto limit processes from \cite{ferreira2014,dombry2015}, commonly used as statistical models for spatial processes conditioned on high threshold exceedances of a certain {\textit cost (or loss) functional} $\ell$.  In practice, $\ell$-Pareto limit processes are very useful since one can tailor $\ell$ to the type of extreme events one is interested in, they are generative models one can simulate from, and one can extract their realizations from real data for statistical modeling.} 
 
Let $T$ be a compact domain 
%{$0 \in \mathring{T}$, the interior of $T$, }  
satisfying $r_T := \sup\{r\in \Reals^+ : B(0, r)\subseteq T\} > 0$. Let $X$ be a continuous, stationary random field defined on $\Reals^d$, and let $X |_{T}$ be the random field $X$ restricted to the domain $T$. Let $\mathcal{C}_0$ be the set of continuous functions from $T$ to $[0,\infty)$, excluding the constant function $0$. Let $\mathcal{S} = \{x\in \mathcal{C}_0 :   \|x  \|_T = 1\}$, where $  \|x  \|_T := \sup_{s\in T} x(s)$.
% \tho{[I propose to simplify notation by introducing the following two $\ell$-functions here and use them subsequently, and to further use analogous notations such as $\sigma_T$ and $\sigma_g$:]}
% \tho{Two  cost functionals are relevant here, $\ell_T(x) = \|x\|_T$ and $\ell_0(x)=x(0)$.}\\

%In Appendix~\ref{sec:appendix_rv}, we recall from  \cite{dombry2015} what it means for a random field to be \textit{regularly varying with exponent $\alpha$ and spectral measure $\sigma$ on $\mathcal S$}.
In Appendix~\ref{sec:appendix_rv}, we recall from  \cite{dombry2015} what it means for a random field to be \textit{regularly varying with exponent $\alpha$ and spectral measure $\sigma$ on $\mathcal S$}. 
The limiting behavior of these random fields at high thresholds can be well described by \textit{$\ell$-Pareto random fields} (see Lemma~\ref{lem:rvlp} in Appendix~\ref{sec:appendix}); more recently also called $r$-Pareto random fields with $r$ standing for \emph{risk} \citep{fondeville2022}.
These random fields are constructed by using a cost functional $\ell : \mathcal{C}_0 \to [0,\infty)$, and the two $\ell$-Pareto processes that we study in relation to the extremal range use the cost functionals $\ell_0(x) = x(0)$ (i.e., conditioning on an exceedance at the origin) and $\ell_T(x) = ||x||_T$ (i.e., conditioning on an exceedance of the spatial supremum). The precise definition of an $\ell$-Pareto random field is provided in Definition~\ref{def:l-pareto} in Appendix~\ref{sec:appendix}.  {For regularly varying $X$, it is possible to express the limit distribution of the extremal range in terms of these two different constructions of $\ell$-Pareto processes.}

\begin{proposition}\label{prp:regularly_varying_extremal_range}
    Suppose that $X |_T$ is regularly varying with exponent $\alpha > 0$ and spectral measure $\sigma$ on $\mathcal{S}$. Let $Y_0$ and $Y_T$ be $\ell$-Pareto processes with exponent $\alpha$ and respective spectral measures $\sigma_0$ and $\sigma_T$, as defined in Appendix~\ref{sec:appendix_rv}.
    Then, for $r \in (0,r_T)$,
    \begin{equation}\label{eqn:rv_limits}
        \lim_{u\to\infty}\P(R^{(u)}_0 \leq r) = 1 - \frac{\E\left[\lebesgue\big((E_{Y_T}(1) \cap T)_{-r}\big)\right]}{\lebesgue_d(T_{-r})\P(Y_{T}(0) > 1)} = \P\big(\exists t\in B(0,r) \mbox{ s.t. } Y_0(t) \leq 1\big).
    \end{equation}
\end{proposition}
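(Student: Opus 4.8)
The plan is to carry the identity behind Proposition~\ref{prp:cdf} through the limit $u\to\infty$, supplying the $\ell$-Pareto convergence of regularly varying fields (Lemma~\ref{lem:rvlp}, after \cite{dombry2015}) twice: once with the cost functional $\ell_T(x)=\|x\|_T$ to reach the middle expression in \eqref{eqn:rv_limits}, and once with $\ell_0(x)=x(0)$ to reach the right-hand one. Since Assumption~\ref{assumtion} is not in force here, I would first re-establish, for each fixed $u$, a version of \eqref{eqn:R_distn_func} that uses only continuity and stationarity of $X$.

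\textbf{Step 1 (a finite-$u$ formula).} Fix $r\in(0,r_T)$, so that $B(0,r)\subseteq T$ and, since $B(0,r_T-r)\oplus B(0,r)\subseteq T$ gives $B(0,r_T-r)\subseteq T_{-r}$, also $\lebesgue_d(T_{-r})>0$. Because $X$ is continuous, $E_X(u)$ is open, so on $\{X(0)>u\}$ one has $\tilde R^{(u)}(0)>r\iff B(0,r)\subseteq E_X(u)\iff\min_{t\in B(0,r)}X(t)>u$; as $\{\min_{B(0,r)}X>u\}\subseteq\{X(0)>u\}$, this gives $\P(R_0^{(u)}>r)=\P(\min_{B(0,r)}X>u)/\P(X(0)>u)$. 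From the set identity $(E_X(u)\cap T)_{-r}=\{s\in T_{-r}:\min_{t\in B(s,r)}X(t)>u\}$, stationarity, and Fubini, I would get $\E[\lebesgue_d((E_X(u)\cap T)_{-r})]=\lebesgue_d(T_{-r})\,\P(\min_{B(0,r)}X>u)$ and $\E[\lebesgue_d(E_X(u)\cap T)]=\lebesgue_d(T)\,\P(X(0)>u)$, hence, for every $u$ (regular variation makes $\P(X(0)>u)>0$),
\[
\P(R_0^{(u)}\leq r)=1-\frac{\E[\lebesgue_d((E_X(u)\cap T)_{-r})]}{\lebesgue_d(T_{-r})\,\P(X(0)>u)}.
\]

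\textbf{Step 2 (passing to the limit).} I would normalise numerator and denominator of the fraction by $\P(\|X\|_T>u)$. By regular variation of $X|_T$, the conditional law of $X|_T/u$ given $\|X\|_T>u$ converges weakly in $C(T)$ to that of $Y_T$; applying the continuous mapping theorem to the bounded functionals $x\mapsto\lebesgue_d((E_x(1)\cap T)_{-r})=\int_{T_{-r}}\I{\min_{B(s,r)}x>1}\,\d s$ and $x\mapsto\I{x(0)>1}$, and noting $E_{X/u}(1)=E_X(u)$ and that both functionals vanish off $\{\|X\|_T>u\}$, yields
\[
\frac{\E[\lebesgue_d((E_X(u)\cap T)_{-r})]}{\P(\|X\|_T>u)}\longrightarrow\E[\lebesgue_d((E_{Y_T}(1)\cap T)_{-r})],\qquad\frac{\P(X(0)>u)}{\P(\|X\|_T>u)}\longrightarrow\P(Y_T(0)>1)>0,
\]
and dividing and combining with Step~1 gives the middle term of \eqref{eqn:rv_limits}. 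Running the same argument from $\P(R_0^{(u)}>r)=\P(\min_{B(0,r)}(X/u)>1\mid X(0)>u)$, now with the weak convergence of $X|_T/u$ given $X(0)>u$ to $Y_0$ and the continuous mapping theorem for $x\mapsto\I{\min_{B(0,r)}x>1}$, gives $\P(R_0^{(u)}>r)\to\P(\min_{B(0,r)}Y_0>1)$, i.e. $\lim_u\P(R_0^{(u)}\leq r)=\P(\exists\,t\in B(0,r):Y_0(t)\leq1)$, the right-hand term of \eqref{eqn:rv_limits}.

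\textbf{Step 3 (continuity inputs and the main obstacle).} Each invocation of the continuous mapping theorem requires the functional to be almost surely continuous at the limit. For the minimum functionals this reduces to $\P(\min_{t\in B(0,r)}Y_0(t)=1)=\P(\min_{t\in B(s,r)}Y_T(t)=1)=\P(Y_T(0)=1)=0$; for the volume functional $\psi(x)=\lebesgue_d((E_x(1)\cap T)_{-r})$, uniform convergence $x_n\to x$ forces $\min_{B(s,r)}x_n\to\min_{B(s,r)}x$ uniformly in $s$, so dominated convergence gives $\psi(x_n)\to\psi(x)$ whenever $\lebesgue_d(\{s\in T_{-r}:\min_{B(s,r)}x=1\})=0$, which by Fubini holds $\P_{Y_T}$-a.s.\ under the same no-atom facts. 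Those facts I would derive from the representation of an $\ell$-Pareto process as $P\cdot V$ with $P$ standard Pareto independent of $V\geq0$: for any $c\geq0$ independent of $P$, $\P(Pc=1)=0$. The genuinely delicate point is exactly this a.s.-continuity check — verifying that the limiting Pareto field never lies at level $1$ on a set of positive $d$-volume — together with the bookkeeping of using $\P(\|X\|_T>u)$ as a common normaliser so that the eroded-volume term, the marginal-exceedance term, and the origin-conditioned probability each converge separately; the substantive probabilistic input, the $\ell$-Pareto convergence itself, is quoted from \cite{dombry2015}.
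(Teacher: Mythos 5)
Your proposal is correct and follows essentially the same route as the paper: the Proposition~\ref{prp:cdf}-type identity applied on the eroded window $T_{-r}$ (the paper cites Proposition~\ref{prp:cdf} directly, noting $(E_X(u)\cap T)_{-r}=E_X(u)_{-r}\cap T_{-r}$), normalisation by $\P(\|X\|_T>u)$, and two applications of Lemma~\ref{lem:rvlp} with the cost functionals $\ell_T$ and $\ell_0$ to obtain the middle and right-hand expressions of~\eqref{eqn:rv_limits}. Your Step~3, checking almost-sure continuity of the indicator and eroded-volume functionals via the no-atom property of the Pareto radial component, spells out details the paper leaves implicit, but it is the same argument in substance.
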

\smallskip

The proof of Proposition~\ref{prp:regularly_varying_extremal_range} is postponed to  Appendix~\ref{ProofsProvidedResults}. The first equation gives an expression of the distribution function \eqref{eqn:R_distn_func} using the properties of the Pareto limit random fields.  {The statement of Proposition~\ref{prp:regularly_varying_extremal_range} tells us that for large enough thresholds $u$, regularly varying fields possess approximately the same distribution of their extremal range whatever the exact value of $u$. The asymptotic behavior can therefore be characterized by normalizing with respect to the threshold value and using a fixed threshold $u=1$, without explicitly conditioning on exceedance in the $\ell$-Pareto limit processes, since $Y_0(0)>1$ and $\|Y_T\|_T>1$ almost surely. In practice, the limit distribution of $R^{(u)}_0$ does not depend on $u$, and can therefore be estimated by pooling information from excursion sets at several high thresholds $u$.}

\subsection{Connections with the tail dependence coefficient}

Taking a more non-parametric perspective, we continue using the threshold function $u_p$ as defined in Definition~\ref{def:u_p} that adapts to non-stationary random fields. Recall that for two sites $s_1, s_2 \in \Reals^{d}$, the tail dependence coefficient function of a spatial random field $X$ is defined as $\chi(s_1,s_2) := \lim_{p\to 1}\chi_p(s_1,s_2)$, where, \begin{equation}\label{chiEq}\chi_p(s_1,s_2) := %\frac{\P\left(X(s_1) > u_p(s_1),\,X(s_2) > u_p(s_2)\right)}{1-p} = 
\P\left(X(s_1) > u_p(s_1) \ \vert X(s_2) > u_p(s_2)\right).
\end{equation}

The interested reader is referred to \cite{LI2009243} for possible generalisations of the tail
dependence coefficient   function  in~\eqref{chiEq} in the  multivariate setting. 
Here, we use the following definition of asymptotic (in)dependence. The random field $X$ is said to be \textit{asymptotically independent} if $\chi(s_1,s_2)= 0$ for all $s_1 \neq s_2$, and \textit{asymptotically dependent} if $\chi(s_1,s_2)> 0$, for all $s_1, s_2 \in \Reals^{d}$.
If $X$ exhibits asymptotic independence, then we have immediately that $R_s^{(u)}\xrightarrow[]{\P} 0$, as $u\to\infty$. This simple observation is a corollary of the following proposition.

\begin{proposition}\label{prp:dependence_extremal_range_relation}
    Let $X$ be any random field on $\Reals^d$. For all $s\in\Reals^{d}$ and all $p\in (0,1)$,
    $$\P\big(R_0^{(u_p)} \leq \norm{s}\big) \geq 1 - \chi_p(s,0).$$
\end{proposition}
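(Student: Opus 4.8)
The plan is to unpack both sides in terms of the unconditioned field and the conditioning event, and then observe that the event on the left is implied (up to the conditioning) by the event on the right. Fix $s\in\Reals^d$ and $p\in(0,1)$, and write $r=\norm{s}$. By Definition~\ref{def:extremal_range}, the complementary event on the left-hand side satisfies
\begin{equation*}
\P\big(R_0^{(u_p)} > r\big) = \P\big(\tilde R^{(u_p)}(0) > r \,\big\vert\, X(0) > u_p(0)\big) = \P\big(B(0,r)\subset E_X(u_p) \,\big\vert\, X(0)>u_p(0)\big),
\end{equation*}
using that $\tilde R^{(u_p)}(0) = \dist(0,(E_X(u_p))^c)$ is $>r$ exactly when the closed ball $B(0,r)$ is contained in $E_X(u_p)$. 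Since $s\in B(0,r)$, the event $\{B(0,r)\subset E_X(u_p)\}$ is contained in $\{s\in E_X(u_p)\} = \{X(s)>u_p(s)\}$. Therefore
\begin{equation*}
\P\big(R_0^{(u_p)} > r\big) \leq \P\big(X(s) > u_p(s) \,\big\vert\, X(0) > u_p(0)\big) = \chi_p(s,0),
\end{equation*}
by the definition~\eqref{chiEq} of $\chi_p$. Taking complements gives $\P(R_0^{(u_p)}\leq r) \geq 1 - \chi_p(s,0)$, which is the claim.

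The only subtlety worth spelling out is the first displayed identity: one must check that conditioning on $\{X(0)>u_p(0)\}$ is legitimate, i.e.\ that this event has positive probability. This is exactly what is guaranteed by the choice of threshold $u_p(0) = q_p(X(0))$ together with $p<1$, so that $\P(X(0)>u_p(0)) \geq 1-p > 0$ whenever the $p$-quantile is not an atom boundary pathology; in any case $\P(X(0) > u_p(0)) > 0$ for $p\in(0,1)$ by definition of the quantile, which is what Definition~\ref{def:extremal_range} requires through the condition $u(0) < u_X^*(0)$. Since the statement is asserted for \emph{any} random field $X$ (no stationarity, no Assumption~\ref{assumtion}, no positive reach), no measurability or geometric regularity of $E_X(u_p)$ beyond what is needed to make $\tilde R^{(u_p)}(0)$ a well-defined random variable is used; the monotonicity $\{B(0,r)\subset E_X(u_p)\}\subseteq\{s\in E_X(u_p)\}$ is a purely set-theoretic containment.

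I do not anticipate a genuine obstacle here; the proposition is essentially an unwinding of definitions combined with the elementary observation that a ball containing $s$ cannot be inside the excursion set unless $s$ itself is. The one place to be careful in writing is to phrase the distance characterization of $\tilde R^{(u_p)}$ correctly with the closed ball (consistent with $B(s,r)$ being closed in Definition~\ref{def:extremal_range}), and to make sure the conditional probabilities are all with respect to the same conditioning event $\{X(0)>u_p(0)\}$ so that the inequality between the two conditional probabilities follows directly from the containment of events.
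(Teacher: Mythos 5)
Your argument is correct and is essentially the paper's own proof: both rest on the containment of the event $\{\tilde R^{(u_p)}(0) > \norm{s}\}\cap\{X(0)>u_p(0)\}$ in $\{X(s)>u_p(s)\}\cap\{X(0)>u_p(0)\}$, followed by division by $\P(X(0)>u_p(0))$ and taking complements. (Note that only the one-sided implication $\tilde R^{(u_p)}(0)>\norm{s}\ \Rightarrow\ s\in E_X(u_p)$ is actually needed, so your ``exactly when'' characterization via the closed ball---which can fail when the excursion set is not open, as is allowed for an arbitrary random field---is harmless, and positivity of the conditioning probability is exactly what the requirement $u_p(0)<u_X^*(0)$ in Definition~\ref{def:extremal_range} guarantees, rather than the bound $\P(X(0)>u_p(0))\geq 1-p$, which need not hold for distributions with atoms.)
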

\begin{proof}
 {For all $p \in (0, 1)$,} the event $\{\tilde R^{(u_p)}(0) > \norm{s},\, X(0) > u_p\}$ is contained in the event $\{X(s) > u_p,\, X(0) > u_p\}$. Therefore,
    $\P\big(\tilde R^{(u_p)}(0) > \norm{s},\, X(0) > u_p\big) \leq \P\big(X(s) > u_p,\, X(0) > u_p\big)$. A division by $\P(X(0) > u_p)$ (equal to $1-p$ if $X(0)$ has a continuous distribution function) implies
    $\P\big(\tilde R^{(u_p)}(0) > \norm{s} \ |  
    X(0) > u_p\big) \leq \P\big(X(s) > u_p \ | X(0) > u_p\big)$,
    and the result holds by taking compliments.
\end{proof}

Therefore, asymptotic dependence is a necessary condition for $R^{(u_p)}_0$ to have a non-degenerate limit distribution as $p\to 1$, with no rescaling. However, it is not sufficient in general. In Appendix~\ref{sec:appendix_counter}, we construct a malicious example of an asymptotically dependent random field for which $R^{(u_p)}_0 \xrightarrow[p\to 1]{\P} 0$.

The following theorem makes an important link between the extremal range and the tail dependence coefficient, and establishes that in this specific case, $\chi(0,s) = 1$ for all $s\in\Reals^{d}$.

\begin{theo}\label{theo2}
Let $X$ be an isotropic random field on $\Reals^d$. Suppose that there exists $p_0 \in (0,1)$ such that for all $p\in(p_0,1)$, the random field $X$ paired with the threshold function $u_p$  satisfy Assumption~\ref{assumtion}.  Let $h$ be a real function of $p\in (0,1)$ such that
    $$h(p)\frac{C_d^*(E_X(u_p))}{C_{d-1}^*(E_X(u_p))}\xrightarrow[p\to1]{} \infty.$$
    Then, for any fixed $s\in\Reals^d$,
    $\chi_p(s/h(p), 0) \xrightarrow[p\to 1]{} 1$.
\end{theo}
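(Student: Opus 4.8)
The plan is to relate the event $\{\chi_p(s/h(p),0) < 1 - \varepsilon\}$, or rather its complement, to the probability that the extremal range $R_0^{(u_p)}$ exceeds $\|s\|/h(p)$, and then to show the latter probability tends to $1$. First I would invoke Proposition~\ref{prp:dependence_extremal_range_relation} with the point $s/h(p)$ in place of $s$, which gives
\begin{equation*}
\chi_p(s/h(p),0) \geq \P\big(R_0^{(u_p)} > \|s\|/h(p)\big) = 1 - \P\big(R_0^{(u_p)} \leq \|s\|/h(p)\big).
\end{equation*}
Since $\chi_p \leq 1$ trivially, it therefore suffices to prove that $\P\big(R_0^{(u_p)} \leq \|s\|/h(p)\big) \to 0$ as $p\to 1$. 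In other words, the whole theorem reduces to showing that the extremal range, rescaled by the factor $h(p)$, is pushed to $+\infty$ in probability.

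To control $\P\big(R_0^{(u_p)} \leq \|s\|/h(p)\big)$ I would use Theorem~\ref{thm:distribution_small_r}, which says that for each fixed $p$ the distribution function of $R_0^{(u_p)}$ is asymptotically linear near $0$ with slope $2C_{d-1}^*(E_X(u_p))/C_d^*(E_X(u_p))$. The subtlety is that we are evaluating the distribution function at the moving argument $\|s\|/h(p)$ rather than at a fixed small $r$, so one cannot directly substitute. The clean way around this is to use the monotonicity of distribution functions together with the erosion-volume representation of Proposition~\ref{prp:cdf}: from Equation~\eqref{eqn:R_distn_func},
\begin{equation*}
\P\big(R_0^{(u_p)} \leq r\big) = 1 - \frac{\E\big[\lebesgue_d\big(E_X(u_p)_{-r}\cap T\big)\big]}{\E\big[\lebesgue_d\big(E_X(u_p)\cap T\big)\big]},
\end{equation*}
and then bound the eroded-volume numerator from below using a Steiner-type inequality, namely that for a set of positive reach $\lebesgue_d(A_{-r}\cap T) \geq \lebesgue_d(A\cap T) - r\,\lebesgue_{d-1}(\partial(A\cap T)) + o(\cdot)$ type estimates, uniformly enough in $p$ to conclude
\begin{equation*}
\P\big(R_0^{(u_p)} \leq r\big) \leq \frac{2C_{d-1}^*(E_X(u_p))}{C_d^*(E_X(u_p))}\,r + (\text{lower-order terms}).
\end{equation*}
Applying this at $r = \|s\|/h(p)$ and using the hypothesis that $h(p)\,C_d^*(E_X(u_p))/C_{d-1}^*(E_X(u_p)) \to \infty$, the leading term becomes $\|s\|\cdot \big(2C_{d-1}^*(E_X(u_p))/C_d^*(E_X(u_p))\big)/h(p) \to 0$, and one checks the remainder also vanishes. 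Isotropy enters only to make $\chi_p(s/h(p),0)$ and $R_0^{(u_p)}$ depend on $s$ through $\|s\|$ consistently with Proposition~\ref{prp:cdf}, which is stated at the origin.

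The main obstacle I anticipate is making the Steiner-type upper bound on $\P\big(R_0^{(u_p)}\leq r\big)$ hold \emph{uniformly in $p$} as $r\to 0$ along the sequence $r=\|s\|/h(p)$: Theorem~\ref{thm:distribution_small_r} as stated is a pointwise-in-$p$ statement, so one must either re-examine its proof to extract the needed uniformity, or — more robustly — argue directly at the level of expected eroded volumes using the monotone inclusion $E_X(u_p)_{-r}\cap T \supseteq (E_X(u_p)\cap T)_{-r}$ and the integral-geometric identity $\E[\lebesgue_d((E_X(u_p)\cap T)_{-r})] = \int_0^{r_T}\!(\dots)$ that underlies Proposition~\ref{prp:cdf}, combined with the defining limits \eqref{haus}--\eqref{volume} of the Lipschitz-Killing densities. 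An alternative, possibly cleaner route that sidesteps uniformity entirely: fix $\varepsilon>0$; for each large $p$ choose (by Theorem~\ref{thm:distribution_small_r}) a threshold radius $\delta_p>0$ with $\P(R_0^{(u_p)}\le r)\le (1+\varepsilon)\,\tfrac{2C_{d-1}^*}{C_d^*}\,r$ for $r\le \delta_p$; then show that $\|s\|/h(p) \le \delta_p$ eventually is \emph{not} automatic and hence this naive approach fails — which is precisely why the honest argument must go through the erosion representation rather than a black-box application of Theorem~\ref{thm:distribution_small_r}. I would therefore commit to the erosion/Steiner route and budget most of the proof's length to that uniform estimate.
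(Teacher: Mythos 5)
Your reduction loses too much, and the statement you reduce to is in fact false under the theorem's hypotheses. Proposition~\ref{prp:dependence_extremal_range_relation} does give $\chi_p(s/h(p),0)\geq \P\big(R_0^{(u_p)}>\norm{s}/h(p)\big)$, so it would suffice to show $\P\big(R_0^{(u_p)}\leq\norm{s}/h(p)\big)\to0$; but this is strictly stronger than the theorem: $1-\chi_p(qs,0)$ is the probability that \emph{one fixed point} at distance $q\norm{s}$ is a non-exceedance, whereas $\P(R_0^{(u_p)}\leq q\norm{s})$ is the probability that \emph{some} point within that distance is. The paper's own counterexample, the field in Equation~\eqref{eqn:counter} of Appendix~\ref{sec:appendix_counter}, defeats the stronger statement: it is isotropic, satisfies Assumption~\ref{assumtion}, and has $C_2^*(E_X(u_p))/C_1^*(E_X(u_p))\to\infty$, so the theorem's hypothesis holds with $h\equiv1$; yet $R_0^{(u_p)}\xrightarrow{\P}0$, so $\P(R_0^{(u_p)}\leq\norm{s})\to1$ rather than $0$, even though $\chi_p(s,0)\to1$. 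Consequently the uniform-in-$p$ erosion/Steiner estimate you plan to devote the proof to, namely $\P(R_0^{(u_p)}\leq r)\leq(1+o(1))\,2C_{d-1}^*(E_X(u_p))\,r/C_d^*(E_X(u_p))$ along $r=\norm{s}/h(p)$, cannot exist under Assumption~\ref{assumtion} alone — it would imply the false claim. The obstruction is not a technical uniformity issue: dense small holes can collapse the extremal range while leaving pairwise tail dependence at fixed (rescaled) separations intact.

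The paper's proof therefore does not go through the extremal range at all. It bounds the two-point quantity directly via \cite[Theorem~2.1]{cotsakis2022_1}, which states that $q^{-1}\P\big(X(qs)\leq u_p<X(0)\big)$ converges, as $q\to0$, to $2C_{d-1}^*(E_X(u_p))\norm{s}/\beta_d$ with the limit approached \emph{from below}; the bound is thus valid for every $q>0$, not merely asymptotically. Taking $q=1/h(p)$ and dividing by $\P(X(0)>u_p(0))=C_d^*(E_X(u_p))$ gives $1-\chi_p(s/h(p),0)\leq 2C_{d-1}^*(E_X(u_p))\norm{s}/\big(h(p)\,\beta_d\,C_d^*(E_X(u_p))\big)\to0$, with no interchange of limits and no uniformity to establish. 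To repair your argument you would have to work with this covariogram-type increment (Lipschitz in the displacement, with constant governed by $C_{d-1}^*$) rather than with the infimum over a ball that defines $R_0^{(u_p)}$.
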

\begin{proof}
    Under Assumption~\ref{assumtion} one can use the result of \cite[Theorem~2.1]{cotsakis2022_1} which states that for $p\in (0,1)$ and $s\in\Reals^d$,
    \begin{equation}\label{eqn:beta_d}
        \frac{1}q\P\big(X(qs) \leq u_p < X(0)) \xrightarrow[q\to 0]{} \frac{2 C_{d-1}^*(E_X(u_p))}{\beta_d}\, \norm{s},
    \end{equation}
    where $
\beta_d = \frac{2\sqrt{\pi}\ \Gamma(\frac{d+1}{2})}{\Gamma(\frac{d}{2})}$ and the limit is approached from below.  
    Thus, for any $q\in \Reals^+$, a division by $1-p$ yields
    $$\frac{1-\chi_p(qs,0)}q \leq \frac{2C_{d-1}^*(E_X(u_p))}{\beta_d\, C_d^*(E_X(u_p))} \norm{s}.$$
      By setting $q = 1/h(p)$, we find that
    $$1-\chi_p(s/h(p),0) \leq \frac{2 C_{d-1}^*(E_X(u_p))}{h(p)\beta_d\, C_d^*(E_X(u_p))} \norm{s} \xrightarrow[p\to 1]{} 0.$$
    The desired result holds since $\chi_p \in [0,1]$ for all $p\in (0,1)$.
\end{proof}

\begin{rema}
    Recall that $2\,C_{d-1}^*(E_X(u_p))/C_d^*(E_X(u_p))$ is the limit value in Theorem~\ref{thm:distribution_small_r}, giving the first-order approximation of the cumulative distribution function of the extremal range. For many random fields, this is seemingly the rate at which space should be rescaled as $p\to 1$ if one is to expect the tail dependence coefficient and the distribution function of the extremal range to stabilize to values strictly between $0$ and $1$. 
\end{rema}
     
\section{Numerical studies}\label{numericSection}  
This section presents two numerical studies to illustrate the main results of this paper, namely Theorems~\ref{thm:distribution_small_r} and~\ref{theo2}. To limit the computational complexity of these studies, all of the random fields that we simulate are two dimensional ($d = 2$).

To illustrate the generality of our results, we consider a variety of random fields for which the Lipschitz-Killing Curvatures densities  have known with closed-form expressions. The Gaussian Kinematic Formula \citep[][Theorem~15.9.5]{adler2007} can be leveraged to obtain the LKCs densities for random fields of \textit{Gaussian type}. Such random fields can be expressed in terms of a functional of finitely many independent, identically distributed Gaussian random fields. The two-dimensional Gaussian random fields that we consider in this section, denoted by $G$, are stationary with zero-mean, unit-variance, and Matérn covariance function (see Equation  \eqref{Matern})
with $\nu = 2.5$. 

The four types of random fields that we construct from the distribution $G$ are obtained as follows.
\begin{itemize}
    \item \textbf{Gaussian random field $G$.} The two-dimensional random field $G$ is described above.
    \item \textbf{Student random field $T$.} For independent realizations $G_1, G_2, G_3, G_4$ of $G$, we consider the Student random field with $k = 3$ degrees of freedom, defined by     $T(x) := G_4(x)/ \sqrt{G_1(x)^2 + G_2(x)^2 + G_3(x)^2}$, for any $x \in\Reals^2$.
    \item \textbf{$\boldsymbol{\chi}^2$ random field $K$}. For independent realizations $G_1, G_2, G_3$ of $G$, we consider the $\chi^2$ random field with $k = 3$ degrees of freedom, defined by  $K(x) :=  G_1(x)^2 + G_2(x)^2 + G_3(x)^2$,  for any $x \in\Reals^2$.
    \item \textbf{Gaussian scale mixture random field $W$.} With $\Lambda$ a Pareto random variable with parameter $\alpha=2$,  independent of $G$, we consider the Gaussian mixture random field  defined by $W(x) :=  \Lambda \times G(x)$,  for any $x \in\Reals^2.$
\end{itemize}
For specific formulas of local maxima and the expected Euler characteristic of
excursion sets of $\chi^2$ and Student random fields above, the interested reader is referred to  \cite{Worsley_1994}. 

The random field $G$ can be efficiently simulated on a discrete set of points by computing the covariance for each pair of points in the set, and performing a Cholesky decomposition of the resulting covariance matrix. The resulting matrix can be then used to transform a vector of standard Gaussian random variables to realizations of $G$ evaluated on the points in the domain. In this case study, $G$ is simulated on a square grid of $121 \times 121$ equispaced points with vertices at $(-0.5, -0.5)$ and $(0.5, 0.5)$.

\begin{figure}[H]
    \centering
\includegraphics[width=0.9\linewidth]{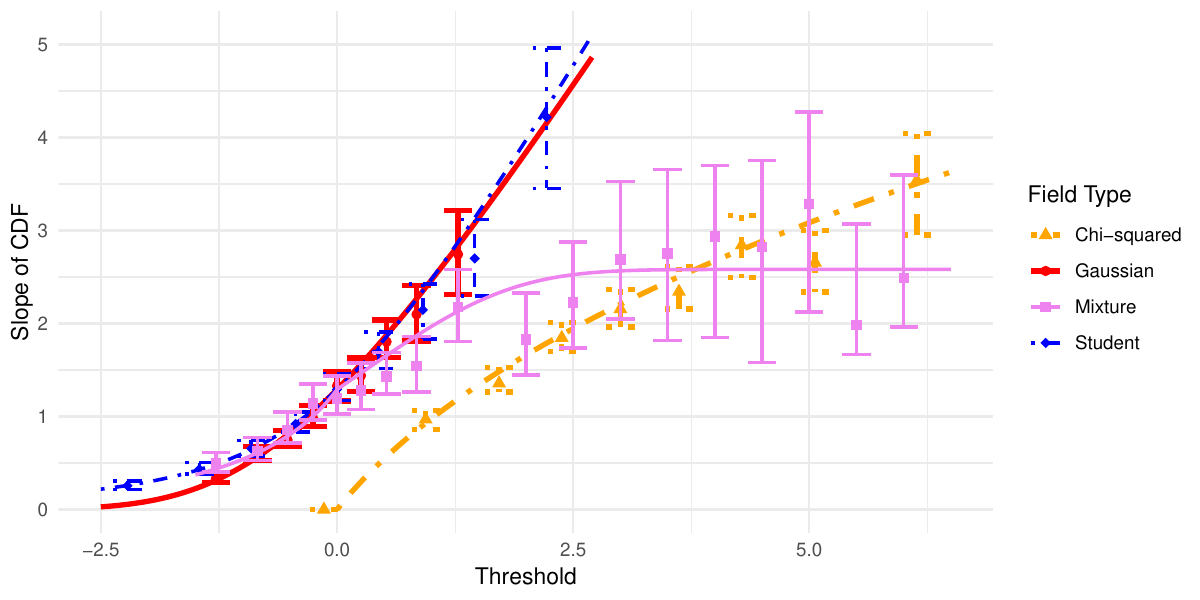}
    \caption{For each random field type, the quantity $\lim_{r\to 0^+}\P(R_0^{(u)} \leq r)/r$ is estimated for several values of the threshold $u$. These estimates and their 95\% confidence intervals are shown along with the curves that correspond to the theoretical values.}
    \label{fig:thm1}
\end{figure}

\begin{figure}[H]
    \centering
    \includegraphics[width=0.9\linewidth]{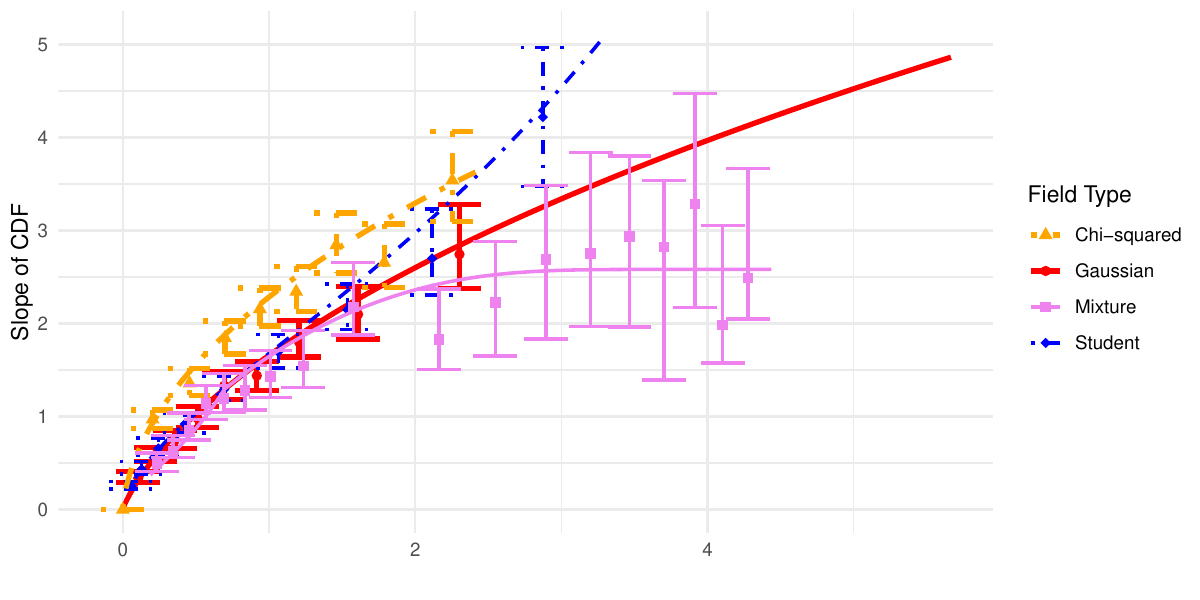}
    \put(-210,3){$-\log(1-p)$}
    \caption{See the caption of Figure~\ref{fig:thm1}. The same information is displayed, but the random fields are normalized to have $\mathrm{Exp}(1)$ margins. That is, the threshold corresponding to $p\in (0,1)$ is $u_p$ as defined in Definition~\ref{def:u_p}.
    % , and the $x$-axis is transformed to $-\log(1-p)$.
    }
    \label{fig:thm1_standardized}
\end{figure}

\subsection{Probability density of the extremal range near 0}

Theorem~\ref{thm:distribution_small_r} provides the limiting value of the slope of the cumulative distribution function (CDF) of the extremal range at 0 for a fixed threshold $u$. Since Assumption~\ref{assumtion} holds for all of the aforementioned random fields, for any finite $u$, Theorem~\ref{thm:distribution_small_r} can be applied. These theoretical values are compared with empirical estimates, for several threshold values $u$, in Figure~\ref{fig:thm1}. We simulated 5000 independent realizations of each of the above mentioned random fields. For a series of predetermined thresholds, the empirical cumulative distribution function of the extremal range was calculated based off of the realizations that exceeded the threshold at the origin {distance to first non-exceedance}. The slope of the CDF at 0 was estimated by fitting a smooth spline through the empirical CDF, and computing the derivative of the spline. The error bars, reflecting the 95\% confidence interval of the estimate, were computed by bootstrapping over the realizations of $G$ that surpassed the threshold at the origin, and computing the empirical CDF for each bootstrap.

Additionally, the same results from Figure~\ref{fig:thm1} are presented on standard exponential margins in Figure~\ref{fig:thm1_standardized}. That is, the threshold $u_p$ is used for each of four random fields of Gaussian type. This allows one to compare the different rates at which asymptotic dependence is lost at higher thresholds ($p$ close to 1), as measured by the quantity $\lim_{r\to 0^+}\P(R_0^{(u_p)} \leq r)/r$. While Figures~\ref{fig:thm1} and~\ref{fig:thm1_standardized} provide numerical justification to the theory established in Section~\ref{sec:parametrization}, they also offer insights into the differing rates at which spatial dependence changes as various thresholds are chosen. Intuitively, the typical size of threshold exceedences are inversely proportional to the slope of the CDF of the extremal range evaluated near 0. The four different random fields exhibit different behaviours as seen in Figures~\ref{fig:thm1} and~\ref{fig:thm1_standardized}.

It is clear that for the case of the Gaussian scale mixture, the behaviour of the distribution of the extremal range near 0 stabilizes for large thresholds. In Figure~\ref{fig:thm1_standardized}, we see that this stability is reached starting from as low as the 95\% quantile of the marginal distribution. This highlights an encouraging feature of the extremal range, is that the asymptotic behaviour is reached quite quickly, and so approximating by limiting behaviour may be justified in practical applications when large, but reasonable threshold levels are used. By Proposition~\ref{prp:dependence_extremal_range_relation}, the stability seen in the case of the Gaussian scale mixture field $W(x)$, for $x \in \Reals^2$, implies that $W$ must be asymptotically dependent in the traditional sense.

When comparing the rate at which asymptotic independence is achieved in the three asymptotically independent models, we see that the Student field $T$, has much weaker spatial dependence than the $\chi^2$ field $K$ at high thresholds. The Gaussian random field $G$, with its weak margins, has spatial dependence comparable to $T$ when absolute thresholds are considered in Figure~\ref{fig:thm1}, but its spatial dependence is comparable to that of $K$ when the threshold is chosen at the same quantile of the respective marginal distribution.

\subsection{The conditional probability of exceedance as a function of the distance to the conditioning site}

Figure~\ref{fig:thm2} illustrates the rate at which asymptotic dependence is lost at high thresholds for the aforementioned models, once normalized to have standard exponential margins. For various thresholds on these margins (shown on the $x$-axis of the figure), we compute both the theoretical and empirical slope of the function $f:\R \to \R$ defined by $f_p(r) = \chi_p(0, re_1)$, where $e_1$ is the unit vector $(1,0)$, and $\chi_p$ is defined in Equation~\eqref{chiEq}. The theoretical value of $f_p'(0)$ can be obtained from Equation~\eqref{eqn:beta_d} (with $d=2$) by a division of $C_2^*(E_X(u_p))$, yielding
$$f_p'(0) = -\frac{2C_{1}^*(E_X(u_p))}{\pi \, C_2^*(E_X(u_p))}.$$

The empirical value of $f_p'(0)$ is obtained for several $p$ as follows. For each random field, and for each value of $p$ tested, 500 random fields that exceed the $p$-quantile at the origin are generated. For each of these random fields, the proportion of sites that exhibit threshold exceedances within a ball of radius $r$ is calculated for several values of $r$. By averaging over the 500 realizations, we obtain an estimate of 
$$\phi_p(r) := \frac{\E[\lebesgue_2(E_X(u_p) \cap B(0,r))\ |\ X(0) > u_p(0)]}{\lebesgue_2(B(0,r))}$$
for several $r$. In our two-dimensional, isotropic setting, the quantities $\phi_p(r)$ and $f_p(r)$ can be shown to be related by
\begin{equation}\label{eqn:phi_and_f}
    f_p(r) = \phi_p(r) + \frac r2 \phi_p'(r).
\end{equation}
Thus, by fitting a smooth cubic spline to the empirical values of $\phi_p(r)$ and applying Equation~\eqref{eqn:phi_and_f}, one obtains estimates for $f_p$ and its first three derivatives. By bootstrapping over the 500 random fields that exceed the $p$-quantile at the origin, we obtain the 95\% confidence interval from the distribution of the resulting estimates of $f_p'(0) = \frac{3}{2} \phi'_p(0)$.

In Figure \ref{fig:thm2}, we display the slope of the empirical tail dependence coefficient at 0 for different values of $u$. The negative reciprocal of $f'_p(0)$ provides an evolving distance (as a function of $p\in(0,1)$) between the test site and the conditioning site, such that the conditional probability of exceedance stabilizes to a value in $(0,1)$ as $p\to 1$.

\begin{figure}
    \centering
    \includegraphics[width=0.9\linewidth]{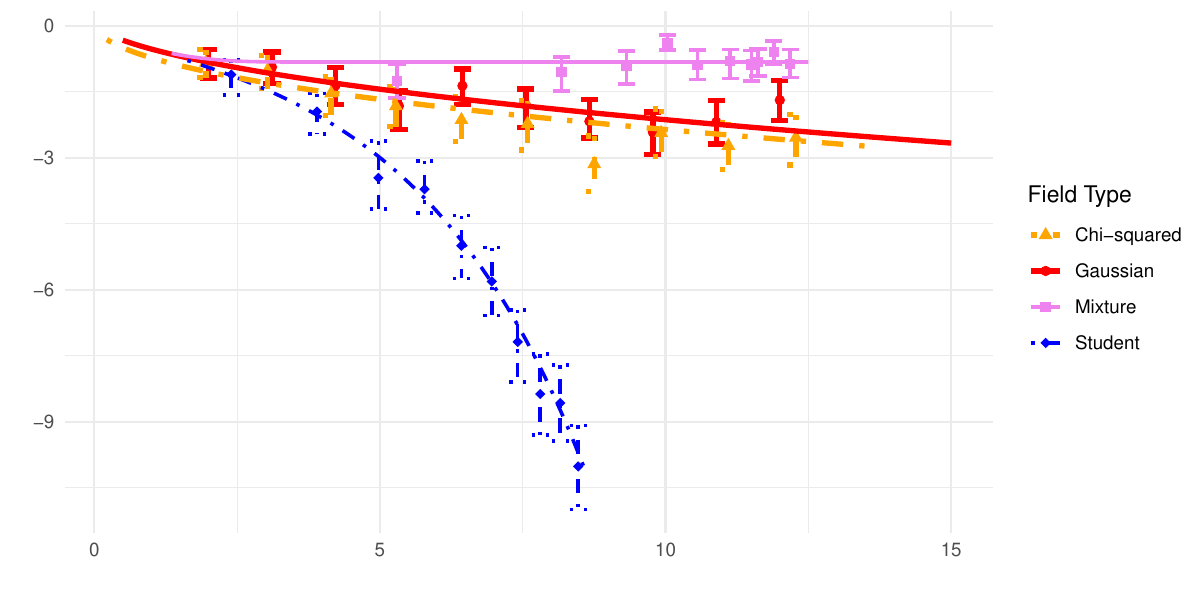}
    \put(-210,3){$-\log(1-p)$}
    \put(-350,100){$f'_p(0)$}
    \caption{For each random field type, the quantity $f'_p(0)$ is estimated for several values of $p \in (0,1)$. These estimates and their 95\% confidence intervals are shown along with the curves that correspond to the theoretical values.}
    \label{fig:thm2}
\end{figure}

Remark that each of the theoretical curves shown in Figure~\ref{fig:thm2} are the same as those in Figure~\ref{fig:thm1_standardized}, only rescaled by a negative constant. This reinforces the fact that both the distribution of the extremal range and the tail dependence function are deeply linked to each other by the Lipschitz-Killing curvature densities.
% Moreover, conditional simulation allows one to explore the properties of the conditional excursion sets deep into the tails of the marginal distributions.

\section{Conclusion}
The extremal range proposed in this paper quantifies the degree of asymptotic (in)dependence locally at the site $s \in \Reals^d$. It is intended to aid flexible  exploratory analysis of dependence in spatial and spatiotemporal extremes beyond the mathematically elegant but rigid framework of asymptotically stable and stationary dependence in max-stable processes and other regularly varying processes. The extremal range can be seen as a  tool at the intersection of spatial extreme value theory, stochastic geometry and topological data analysis. Further research in this area could help foster a high-dimensional statistical learning toolbox for studying complex structures in large data volumes, especially in climate data. \bigskip

\bibliographystyle{APT}
\bibliography{biblio}

\begin{thebibliography}{10}

\bibitem{Adler_Samorodnitsky_Taylor_2010}
{\sc Adler, R.~J., Samorodnitsky, G. and Taylor, J.~E.} (2010).
\newblock Excursion sets of three classes of stable random fields.
\newblock {\em Advances in Applied Probability\/} {\bf 42,} 293--318.

\bibitem{adler2007}
{\sc Adler, R.~J. and Taylor, J.~E.} (2007).
\newblock {\em Random fields and geometry}.
\newblock Springer Monographs in Mathematics. Springer, New York.

\bibitem{AT11}
{\sc Adler, R.~J. and Taylor, J.~E.} (2011).
\newblock {\em Topological complexity of smooth random functions} vol.~2019 of
  {\em Lecture Notes in Mathematics}.
\newblock Springer, Heidelberg.
\newblock Lectures from the 39th Probability Summer School held in Saint-Flour,
  2009, \'Ecole d'\'Et\'e de Probabilit\'es de Saint-Flour. [Saint-Flour
  Probability Summer School].

\bibitem{aghakouchak2020climate}
{\sc AghaKouchak, A., Chiang, F., Huning, L.~S., Love, C.~A., Mallakpour, I.,
  Mazdiyasni, O., Moftakhari, H., Papalexiou, S.~M., Ragno, E. and Sadegh, M.}
  (2020).
\newblock Climate extremes and compound hazards in a warming world.
\newblock {\em Annual Review of Earth and Planetary Sciences\/} {\bf 48,}
  519--548.

\bibitem{AW09}
{\sc Aza{\"\i}s, J.~M. and Wschebor, M.} (2009).
\newblock {\em Level sets and extrema of random processes and fields}.
\newblock John Wiley \& Sons.

\bibitem{berman1971}
{\sc Berman, S.} (1971).
\newblock {Excursions above high levels for stationary {G}aussian processes}.
\newblock {\em Pacific Journal of Mathematics\/} {\bf 36,} 63--79.

\bibitem{berman1982}
{\sc Berman, S.~M.} (1982).
\newblock {Sojourns and Extremes of Stationary Processes}.
\newblock {\em The Annals of Probability\/} {\bf 10,} 1--46.

\bibitem{Bienvenue2017}
{\sc Bienven{\"u}e, A. and Robert, C.~Y.} (2017).
\newblock Likelihood inference for multivariate extreme value distributions
  whose spectral vectors have known conditional distributions.
\newblock {\em Scandinavian Journal of Statistics\/} {\bf 44,} 130--149.

\bibitem{bierme2019}
{\sc Bierm{\'e}, H., Di~Bernardino, E., Duval, C. and Estrade, A.} (2019).
\newblock Lipschitz-{K}illing curvatures of excursion sets for two-dimensional
  random fields.
\newblock {\em Electronic Journal of Statistics\/} {\bf 13,} 536--581.

\bibitem{Bingham1989}
{\sc Bingham, N.~H., Goldie, C.~M. and Teugels, J.~L.} (1989).
\newblock {\em Regular Variation}.
\newblock No.~27. Cambridge University Press.

\bibitem{bleau2000}
{\sc Bleau, A. and Leon, L.~J.} (2000).
\newblock Watershed-based segmentation and region merging.
\newblock {\em Computer Vision and Image Understanding\/} {\bf 77,} 317--370.

\bibitem{bolin2015}
{\sc Bolin, D. and Lindgren, F.} (2015).
\newblock Excursion and contour uncertainty regions for latent {G}aussian
  models.
\newblock {\em Journal of the Royal Statistical Society: Series B (Statistical
  Methodology)\/} {\bf 77,} 85--106.

\bibitem{cambanis1973}
{\sc Cambanis, S.} (1973).
\newblock On some continuity and differentiability properties of paths of
  {G}aussian processes.
\newblock {\em Journal of Multivariate Analysis\/} {\bf 3,} 420--434.

\bibitem{Chenavier2016}
{\sc Chenavier, N. and Hemsley, R.} (2016).
\newblock Extremes for the inradius in the {P}oisson line tessellation.
\newblock {\em Advances in Applied Probability\/} {\bf 48,} 544–573.

\bibitem{chiu2013stochastic}
{\sc Chiu, S., Stoyan, D., Kendall, W. and Mecke, J.} (2013).
\newblock {\em Stochastic Geometry and Its Applications}.
\newblock Wiley Series in Probability and Statistics. Wiley.

\bibitem{coles1999}
{\sc Coles, S., Heffernan, J. and Tawn, J.} (1999).
\newblock Dependence measures for extreme value analyses.
\newblock {\em Extremes\/} {\bf 2,} 339--365.

\bibitem{cotsakis2022_1}
{\sc Cotsakis, R., Di~Bernardino, E. and Duval, C.} (2024).
\newblock {Surface area and volume of excursion sets observed on point cloud
  based polytopic tessellations}.
\newblock {\em The Annals of Applied Probability\/} {\bf 34,} 3093 -- 3124.

\bibitem{dalmao2019}
{\sc Dalmao, F., Le{\'{o}}n, J.~R., Mordecki, E. and Mourareau, S.} (2019).
\newblock {Asymptotic normality of high level-large time crossings of a
  {G}aussian process}.
\newblock {\em Stochastic Processes and their Applications\/} {\bf 129,}
  1349--1370.

\bibitem{fondeville2022}
{\sc de~Fondeville, R. and Davison, A.~C.} (2022).
\newblock Functional peaks-over-threshold analysis.
\newblock {\em Journal of the Royal Statistical Society Series B: Statistical
  Methodology\/} {\bf 84,} 1392--1422.

\bibitem{dehaan2006}
{\sc de~Haan, L. and Ferreira, A.} (2006).
\newblock {\em Extreme value theory: an introduction} vol.~21.
\newblock Springer.

\bibitem{dibernardino2022}
{\sc Di~Bernardino, E., Estrade, A. and Opitz, T.} (2024).
\newblock {Spatial extremes and stochastic geometry for {G}aussian-based
  peaks-over-threshold processes}.
\newblock {\em Extremes\/}.

\bibitem{dombry2015}
{\sc Dombry, C. and Ribatet, M.} (2015).
\newblock {Functional regular variations, Pareto processes and peaks over
  threshold}.
\newblock {\em Statistics and Its Interface\/} {\bf 8,} 9--17.

\bibitem{dombry2018}
{\sc Dombry, C., Ribatet, M. and Stoev, S.} (2018).
\newblock {Probabilities of Concurrent Extremes}.
\newblock {\em Journal of the American Statistical Association\/} {\bf 113,}
  1565--1582.

\bibitem{Engelke2014}
{\sc Engelke, S., Malinowski, A., Oesting, M. and Schlather, M.} (2014).
\newblock Statistical inference for max-stable processes by conditioning on
  extreme events.
\newblock {\em Advances in Applied Probability\/} {\bf 46,} 478--495.

\bibitem{federer1959}
{\sc Federer, H.} (1959).
\newblock Curvature measures.
\newblock {\em Transactions of the American Mathematical Society\/} {\bf 93,}
  418--491.

\bibitem{ferreira2014}
{\sc Ferreira, A. and de~Haan, L.} (2014).
\newblock {The generalized Pareto process; with a view towards application and
  simulation}.
\newblock {\em Bernoulli\/} {\bf 20,} 1717--1737.

\bibitem{gordon1941}
{\sc Gordon, R.~D.} (1941).
\newblock {Values of Mills{\textquotesingle} Ratio of Area to Bounding Ordinate
  and of the {N}ormal Probability Integral for Large Values of the Argument}.
\newblock {\em The Annals of Mathematical Statistics\/} {\bf 12,} 364--366.

\bibitem{heffernan2004}
{\sc Heffernan, J.~E. and Tawn, J.~A.} (2004).
\newblock A conditional approach for multivariate extreme values (with
  discussion).
\newblock {\em Journal of the Royal Statistical Society: Series B (Statistical
  Methodology)\/} {\bf 66,} 497--546.

\bibitem{Hult2006}
{\sc Hult, Henrik, Lindskog and Filip} (2006).
\newblock Regular variation for measures on metric spaces.
\newblock {\em Publications de l'Institut Mathématique\/} {\bf 80(94),}
  121--140.

\bibitem{Hult2005}
{\sc Hult, H. and Lindskog, F.} (2005).
\newblock Extremal behavior of regularly varying stochastic processes.
\newblock {\em Stochastic Processes and their applications\/} {\bf 115,}
  249--274.

\bibitem{huser2017}
{\sc Huser, R., Opitz, T. and Thibaud, E.} (2017).
\newblock {Bridging asymptotic independence and dependence in spatial extremes
  using {G}aussian scale mixtures}.
\newblock {\em Spatial Statistics\/} {\bf 21,} 166--186.

\bibitem{Huser2024}
{\sc Huser, R., Opitz, T. and Wadsworth, J.} (2024).
\newblock Modeling of spatial extremes in environmental data science: Time to
  move away from max-stable processes.

\bibitem{huser2022}
{\sc Huser, R. and Wadsworth, J.~L.} (2022).
\newblock Advances in statistical modeling of spatial extremes.
\newblock {\em Wiley Interdisciplinary Reviews: Computational Statistics\/}
  {\bf 14,} e1537.

\bibitem{kac1959}
{\sc Kac, M. and Slepian, D.} (1959).
\newblock {Large Excursions of {G}aussian Processes}.
\newblock {\em The Annals of Mathematical Statistics\/} {\bf 30,} 1215--1228.

\bibitem{Kratz_Nagel_2016}
{\sc Kratz, M. and Nagel, W.} (2016).
\newblock On the capacity functional of excursion sets of {G}aussian random
  fields on $\mathbb{R}^2$.
\newblock {\em Advances in Applied Probability\/} {\bf 48,} 712--725.

\bibitem{kratz2006}
{\sc Kratz, M.~F.} (2006).
\newblock Level crossings and other level functionals of stationary {G}aussian
  processes.
\newblock {\em Probability Surveys\/} {\bf 3,} 230--288.

\bibitem{Last2023}
{\sc Last, G.} (2023).
\newblock Tail processes and tail measures: An approach via {P}alm calculus.
\newblock {\em Extremes\/} {\bf 26,} 715--746.

\bibitem{Leadbetter1988}
{\sc Leadbetter, M. and Rootzen, H.} (1988).
\newblock Extremal theory for stochastic processes.
\newblock {\em The Annals of Probability\/} 431--478.

\bibitem{Leadbetter1983}
{\sc Leadbetter, M.~R., Lindgren, G. and Rootz{\'{e}}n, H.} (1983).
\newblock {\em Extremes and Related Properties of Random Sequences and
  Processes}.
\newblock Springer New York.

\bibitem{LI2009243}
{\sc Li, H.} (2009).
\newblock Orthant tail dependence of multivariate extreme value distributions.
\newblock {\em Journal of Multivariate Analysis\/} {\bf 100,} 243--256.

\bibitem{Moloney2019}
{\sc Moloney, N.~R., Faranda, D. and Sato, Y.} (2019).
\newblock An overview of the extremal index.
\newblock {\em Chaos: An Interdisciplinary Journal of Nonlinear Science\/} {\bf
  29,} 022101.

\bibitem{pham2013}
{\sc Pham, V.-H.} (2013).
\newblock On the rate of convergence for central limit theorems of sojourn
  times of {G}aussian fields.
\newblock {\em Stochastic Processes and their Applications\/} {\bf 123,}
  427--464.

\bibitem{ripley1988}
{\sc Ripley, B.} (1988).
\newblock {\em {Statistical Inference for Spatial Processes}}.
\newblock Cambridge University Press.

\bibitem{Schlather2003}
{\sc Schlather, M. and Tawn, J.~A.} (2003).
\newblock A dependence measure for multivariate and spatial extreme values:
  Properties and inference.
\newblock {\em Biometrika\/} {\bf 90,} 139--156.

\bibitem{schneider2008}
{\sc Schneider, R. and Weil, W.} (2008).
\newblock {\em {Stochastic and Integral Geometry}}.
\newblock Springer, Berlin, Heidelberg.

\bibitem{serra1984}
{\sc Serra, J.} (1984).
\newblock {\em Image Analysis and Mathematical Morphology}.
\newblock Academic Press.

\bibitem{sezgin2004}
{\sc Sezgin, M. and Sankur, B.~l.} (2004).
\newblock Survey over image thresholding techniques and quantitative
  performance evaluation.
\newblock {\em Journal of Electronic imaging\/} {\bf 13,} 146--168.

\bibitem{sommerfeld2018}
{\sc Sommerfeld, M., Sain, S. and Schwartzman, A.} (2018).
\newblock Confidence regions for spatial excursion sets from repeated random
  field observations, with an application to climate.
\newblock {\em Journal of the American Statistical Association\/} {\bf 113,}
  1327--1340.

\bibitem{Sun1993}
{\sc Sun, J.} (1993).
\newblock Tail probabilities of the maxima of {G}aussian random fields.
\newblock {\em The Annals of Probability\/} {\bf 21,} 34--71.

\bibitem{tawn2018}
{\sc Tawn, J., Shooter, R., Towe, R. and Lamb, R.} (2018).
\newblock Modelling spatial extreme events with environmental applications.
\newblock {\em Spatial statistics\/} {\bf 28,} 39--58.

\bibitem{thale2008}
{\sc Th\"ale, C.} (2008).
\newblock {50 years sets with positive reach---a survey}.
\newblock {\em Surveys in Mathematics and its Applications\/} {\bf 3,}
  123--165.

\bibitem{thibaud2015}
{\sc Thibaud, E. and Opitz, T.} (2015).
\newblock Efficient inference and simulation for elliptical {P}areto processes.
\newblock {\em Biometrika\/} {\bf 102,} 855--870.

\bibitem{wadsworth2022}
{\sc Wadsworth, J.~L. and Tawn, J.~A.} (2022).
\newblock Higher-dimensional spatial extremes via single-site conditioning.
\newblock {\em Spatial Statistics\/} {\bf 51,} 100677.

\bibitem{Worsley_1994}
{\sc Worsley, K.~J.} (1994).
\newblock Local maxima and the expected {E}uler characteristic of excursion
  sets of $\chi^2$, {$F$} and $t$ fields.
\newblock {\em Advances in Applied Probability\/} {\bf 26,} 13--42.

\bibitem{zhang2022}
{\sc Zhang, Z., Huser, R., Opitz, T. and Wadsworth, J.} (2022).
\newblock Modeling spatial extremes using normal mean-variance mixtures.
\newblock {\em Extremes\/} {\bf 25,} 175--197.

\end{thebibliography}

\appendix
\section*{Appendix}
\renewcommand\thesection{A}
\section{Technical definitions and examples}\label{sec:appendix}

\subsection{Stationary and positive reach sets}\label{sec:appendix_reach}

\begin{defi}\label{def:positive_reach}
\citep{federer1959} The reach of a set $S\subseteq\Reals^{d}$ is given by
$\sup\{r \in \Reals: \forall x \in S_r,\, \exists ! s\in S\mbox{ nearest to } x\}$, with $S_r$ as in Definition \ref{def:minkowski}.  A subset of $\Reals^{d}$ is termed \textit{positive reach} if its reach is positive.
\end{defi}

Recall from \cite{federer1959} that a closed set is convex if and only if its reach is infinite. Therefore, the empty set is trivially a positive reach set.

\begin{defi}\label{StationaryRandom setDef} \cite[][Section 6.1.4]{chiu2013stochastic} A random closed set $\,\Xi$ is said to be stationary if $\,\Xi$  and the translated set $\,\Xi_x = \Xi +x$ have the same distribution, for any $x \in  \Reals^{d}$. 
\end{defi}
\subsection{Further  details about  Assumption~\ref{assumtion}}\label{Figure5Appendix}

Under Assumption~\ref{assumtion}, the random field $X$ is not necessarily stationary, as $u$ is not necessarily a constant function in space.    An example  in dimension $d=1$ to illustrate this   condition   is given in Figure  \ref{fig:discontinuous} below.  We display several realizations of a one-dimensional random process $X$ which is non-stationary and not almost surely continuous, but whose excursion set $\{t\in\mathbb{R} : X(t) > u\}$ is a stationary random set for all fixed $u \geq 1$ (see Definition \ref{StationaryRandom setDef}). 

\begin{figure}[H] 
    \centering
    \includegraphics[width=0.7\linewidth]{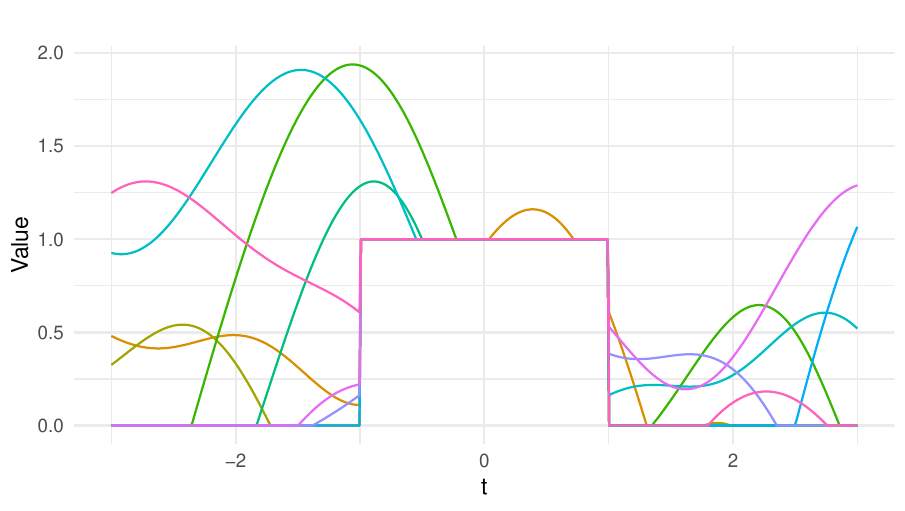}
    \caption{Several realizations, displayed with different colours, of $X$, the point-wise maximum of a stationary Gaussian process with covariance function $C(h) = e^{-h^2/2}$, and the indicator function \(\I{ \vert t \vert \leq 1}\).}
    \label{fig:discontinuous}
\end{figure}

\subsection{A  case where asymptotic dependence is not captured by the extremal range}\label{sec:appendix_counter}

\begin{figure}[H] 
    \centering
    \includegraphics[width=0.5\linewidth]{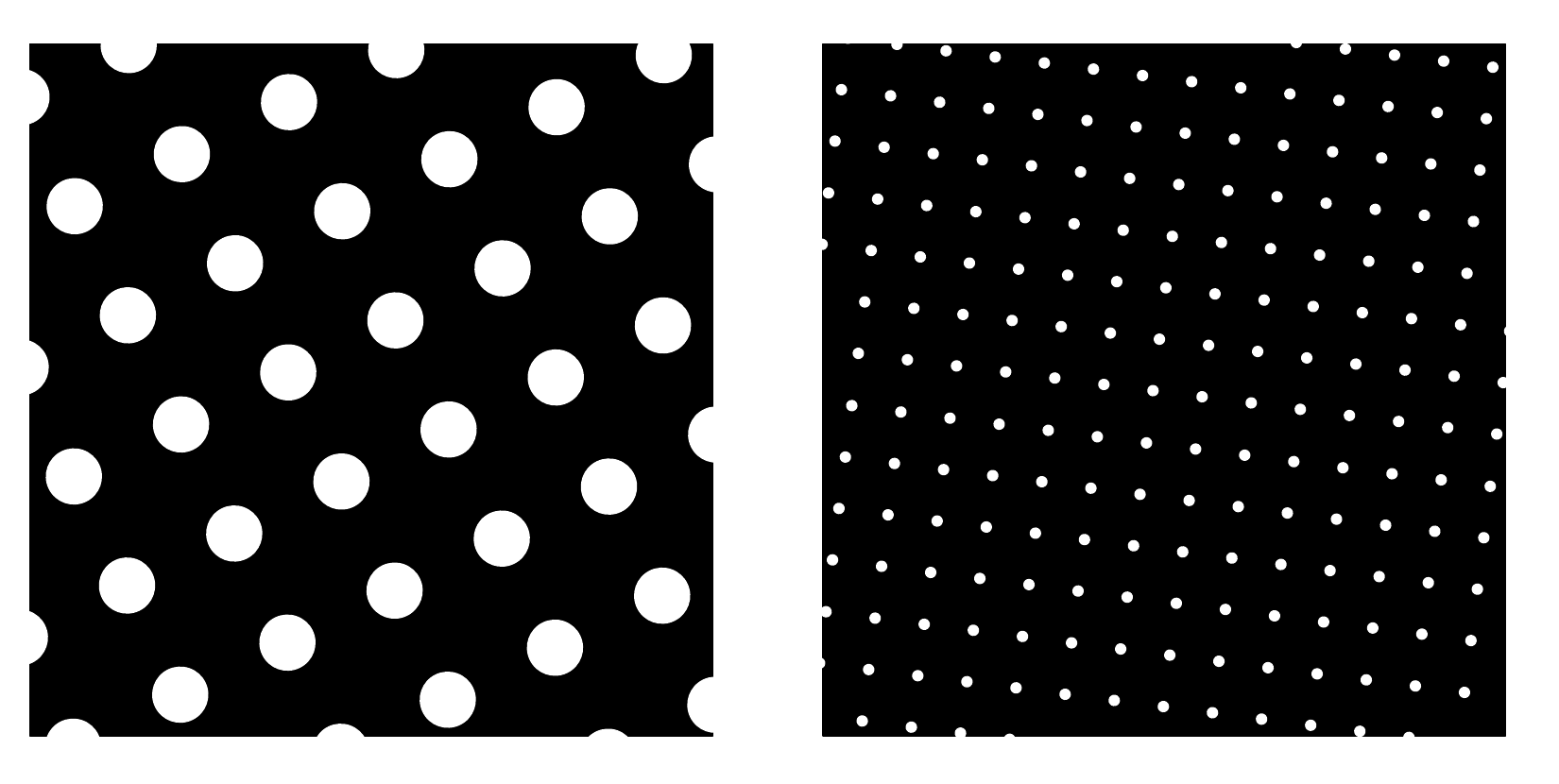}
    \caption{For $X$ in Equation~\eqref{eqn:counter}, we show the excursion set $E_X(1)$ (in black) for moderate $E > 1$ (left panel), and large $E > 1$ (right panel). }
    \label{fig:counter_example_polka_dot}
\end{figure}
 
Let $E\sim \mathrm{Exp}(1)$, $\theta\in \mathrm{Unif}([0,2\pi])$, and $U \sim \mathrm{Unif}([0,1]^2)$ be independent, and consider the stationary, isotropic  {  $2-$dimensional  } random field $\{X(s)\}_{s\in\Reals^2}$ defined by
\begin{equation}\label{eqn:counter}
    X(s) = E\left(1 - \sum_{q\in\mathbb{Z}^2} \I{\norm{q+U-E R_\theta(s)} < 3^{-E}}\right),
\end{equation}
where $R_\theta(s)$ is the image of $s$ rotated by an angle $\theta$ about the origin.
Notice that $X$ satisfies Assumption~\ref{assumtion}, for if the excursion set is not empty, it is the complement of the union of disks of radius $3^{-E}$ with centers on a randomly oriented square grid with spacing $E^{-1}$ (see Figure~\ref{fig:counter_example_polka_dot}).
If for some $p\in (0,1)$ we have $X(0) > u_p$, then $E > u_p$ and $X(R_{-\theta}(U)/E) = 0 \leq u_p$. Thus, $\tilde R^{(u_p)}(0) < \norm{U}/E < \sqrt{2}/u_p$ which tends to 0 as $p\to 1$. Thus, $R_0^{(u_p)}\xrightarrow[p\to1]{\P} 0$. It is not hard to check that
$$\frac{C_2^*(E_X(u_{p}))}{C_1^*(E_X(u_{p}))}\xrightarrow[p\to1]{}\infty,$$
contrary to the behaviour of $R_0^{(u_p)}$.

Intuitively, this random field is unlike many random fields that are studied in practice, since, when there is a large threshold exceedance, the excursion set covers most of the domain. It satisfies $\chi(s_1,s_2) = 1$, for all $s_1,s_2\in \Reals^2$. However, there are small dense holes in the excursion set that limit the size of the extremal range, and the density of the holes is bounded below by the height of the threshold. The combination of these behaviours is exceptionally malicious as most other random fields exhibit small,  distantly spaced exceedances over high thresholds.

\subsection{Regularly varying random fields}\label{sec:appendix_rv}

The process $X\ |_{T}$ is said to be \textit{regularly varying} with exponent $\alpha>0$ and spectral measure $\sigma$ on  $\mathcal{S}$ if there exists a function $a:\Reals^+\to\Reals^+$ such that $a(u)\to\infty$ and
$$u\P\bigg(\frac 1{\ \|X\|_T} X|_{T} \in A,\ \|X\|_T > ra(u)\bigg) \longrightarrow r^{-\alpha}\sigma(A)$$
as $u\to \infty$, for all $r > 0$ and $A\in\mathcal{B}(\mathcal{S})$ satisfying $\sigma(\partial A) = 0$ (see 
Definition~1 in \cite{dombry2015}). This means that the norm $\|X\|_T$ of the random vector, and its projection $\frac {X|_{T} }{\ \|X\|_T}$ onto the unit sphere of the norm, become stochastically independent  as the norm increases to infinity. This property is characteristic for $\ell$-Pareto processes.

\begin{defi}[Definition~4 in \cite{dombry2015}]\label{def:l-pareto}
    The random field $W:\Omega\times T \to \Reals$ is an \textit{$\ell$-Pareto random field} with exponent $\alpha\in \Reals^+$ and spectral measure $\sigma_\ell$ if
    \begin{itemize}
        \item $\P(W\in \mathcal{C}_0) = 1$,
        \item $\P(\ell(W) > u) = u^{-\alpha}$, for all $u > 1$,
        \item $W/\ell(W)$ and $\ell(W)$ are independent, and
        \item $\sigma_\ell(A) = \P(W/\ell(W) \in A)$ for $A\in \mathcal{B}(\mathcal{C}_0)$.
    \end{itemize}
    % In this case, we write $W\sim \mathrm{P}^\ell_{\alpha,\sigma_{\ell}}$
\end{defi}

The measure $\sigma$ is defined with respect to the sup-norm $\|x\|_T$ and not necessarily a probability measure. When $\ell$ is different from the sup-norm, we have to operate a change of measure. In Section~\ref{sec:model_nondegen_rv}, the random fields $Y_0$ and $Y_T$ are $\ell$-Pareto with exponent $\alpha$ and respective spectral measures
$$\sigma_0(A) := \frac{1}{c}\int_{\mathcal{S}}x(0)^\alpha\I{x/x(0)\in A}\sigma(\mathrm{d}x),$$ 
and
$$\sigma_T(A) := \sigma(\mathcal{S}\cap A)/\sigma(\mathcal{S}),$$
for $A\in\mathcal{B}(\mathcal{C}_0)$, where $c := \int_{\mathcal{S}}x(0)^\alpha\sigma(\mathrm{d}x)$.
  
% \newpage
% \begin{figure}[ht]
% \centering

% \begin{subfigure}{.5\textwidth}
%   \centering
%   \includegraphics[width=\linewidth]{images/sim_10yrl.eps}
%   % \caption{$p = 1 - 1/920\ \vert$ Estimates}
%   \label{fig:test4_e}
% \end{subfigure}%
% \begin{subfigure}{.5\textwidth}
%   \centering
%   \includegraphics[width=\linewidth]{images/sim_10yrl_err.eps}
%   % \caption{$p = 1 - 1/920\ \vert$ Standard errors}
%   \label{fig:test4_f}
% \end{subfigure}
% \caption{\rya{UPDATED:} Estimates of $q_{50\%}(R_s^{(u_p)})$ for the IPSL-WRF simulated temperatures, for fixed $p$ 10 year return level. \rya{I think that this figure should be removed entirely.}}
% \label{fig:median_estimates_sim}
% \end{figure} 

The link between regularly varying random fields and $\ell$-Pareto random fields is made by the following lemma.

\begin{lemm}[Theorem~3 in \cite{dombry2015}]\label{lem:rvlp}
    Let $X |_T$ be a regularly varying random field with exponent $\alpha > 0$ and spectral measure $\sigma$ on $\mathcal{S}$. Let $\ell:\mathcal{C}_0\to[0,\infty)$ be a homogeneous cost functional that is continuous at the origin, and is nonzero on a subset of $\mathcal{S}$ with positive $\sigma$ measure. Let $W$ be an $\ell$-Pareto random field associated to the cost functional $\ell$, having exponent $\alpha$ and spectral measure
    $$\sigma_{\ell}(A) := \frac{1}{c}\int_{\mathcal{S}}\ell(x)^\alpha\I{x/\ell(x)\in A}\sigma(\mathrm{d}x),\qquad A\in\mathcal{B}(\mathcal{C}_0),$$
    where $c := \int_{\mathcal{S}}\ell(x)^\alpha\sigma(\mathrm{d}x)$.
    Then,
    $$\P(u^{-1}X |_T \in A   \,|\, \ell(X |_T) > u)\xrightarrow[u\to\infty]{} \P(W\in A),\qquad A\in\mathcal{B}(\mathcal{C}_0).$$
\end{lemm}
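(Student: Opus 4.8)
The plan is to turn the conditional law on the left-hand side into a ratio of two \emph{unconditional} probabilities of the rescaled field $u^{-1}X|_T$, to pass to the limit in the numerator and denominator separately using the regular variation of $X|_T$ written as a vague (that is, $M_0$-) convergence of measures on the cone $\mathcal{C}_0$, and finally to recognize the resulting ratio as $\P(W\in A)$ by a polar-coordinate change of variables that reproduces the spectral measure $\sigma_\ell$.

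First, since $\ell$ is homogeneous of degree one, $\{\ell(X|_T)>u\}=\{u^{-1}X|_T\in\{\ell>1\}\}$, so for a Borel set $A$ with $\P(W\in\partial A)=0$ one may write
$$\P\big(u^{-1}X|_T\in A\,\big|\,\ell(X|_T)>u\big)=\frac{\P\big(u^{-1}X|_T\in A\cap\{\ell>1\}\big)}{\P\big(u^{-1}X|_T\in\{\ell>1\}\big)}.$$
Continuity of $\ell$ at the origin together with homogeneity forces $\ell$ to be bounded on $\mathcal{S}$ (test homogeneity on small multiples of points of $\mathcal{S}$), hence $\{\ell>1\}\subseteq\{\|x\|_T\geq\delta\}$ for some $\delta>0$ and $c=\int_{\mathcal{S}}\ell(\theta)^\alpha\sigma(\d\theta)<\infty$, with $c>0$ because $\ell>0$ on a set of positive $\sigma$-measure. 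Next, the definition of regular variation in Appendix~\ref{sec:appendix_rv}, after setting $t=a(u)$ and letting the auxiliary radius vary, is equivalent to the statement that $\P(X|_T\in u\,\cdot)/\P(\|X\|_T>u)$ converges, on Borel sets bounded away from the zero function, to the measure $\mu_0$ determined by $\mu_0(\{\|x\|_T>\rho,\ x/\|x\|_T\in B\})=\rho^{-\alpha}\sigma(B)/\sigma(\mathcal{S})$; the normalizing function $a(u)$ drops out because only the ratio is needed. Dividing numerator and denominator of the display above by $\P(\|X\|_T>u)$ and applying this convergence — using that $\partial\{\ell>1\}\subseteq\{\ell=1\}$ is $\mu_0$-null by homogeneity and Fubini, so that $\{\ell>1\}$ and $A\cap\{\ell>1\}$ are $\mu_0$-continuity sets — the ratio tends to $\mu_0(A\cap\{\ell>1\})/\mu_0(\{\ell>1\})$.

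It then remains to compute these two quantities. Writing $\mu_0$ in polar coordinates $(\rho,\theta)\in(0,\infty)\times\mathcal{S}$ as $\sigma(\mathcal{S})^{-1}\,\alpha\rho^{-\alpha-1}\,\d\rho\,\sigma(\d\theta)$ and using $\ell(\rho\theta)=\rho\,\ell(\theta)$, one finds $\mu_0(\{\ell>1\})=\sigma(\mathcal{S})^{-1}\int_{\mathcal{S}}\ell(\theta)^\alpha\,\sigma(\d\theta)=c/\sigma(\mathcal{S})$, while the change of variables $\rho=s/\ell(\theta)$, $s>1$, in the same polar integral gives
$$\mu_0\big(A\cap\{\ell>1\}\big)=\frac{1}{\sigma(\mathcal{S})}\int_1^\infty\!\!\int_{\mathcal{S}}\ell(\theta)^\alpha\,\I{s\,\theta/\ell(\theta)\in A}\,\sigma(\d\theta)\,\alpha s^{-\alpha-1}\,\d s=\frac{c}{\sigma(\mathcal{S})}\int_1^\infty\!\!\int_{\mathcal{C}_0}\I{s\omega\in A}\,\sigma_\ell(\d\omega)\,\alpha s^{-\alpha-1}\,\d s,$$
where the last equality is precisely the definition of $\sigma_\ell$ as the pushforward of $\ell(\cdot)^\alpha\sigma/c$ under $\theta\mapsto\theta/\ell(\theta)$. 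Dividing by $\mu_0(\{\ell>1\})=c/\sigma(\mathcal{S})$, the limit of the ratio equals $\int_1^\infty\int_{\mathcal{C}_0}\I{s\omega\in A}\,\sigma_\ell(\d\omega)\,\alpha s^{-\alpha-1}\,\d s$, which is exactly $\P(W\in A)$ for $W$ as in Definition~\ref{def:l-pareto}: $\ell(W)$ has density $\alpha s^{-\alpha-1}$ on $(1,\infty)$, $W/\ell(W)$ has law $\sigma_\ell$, and the two are independent.

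I expect the main obstacle to be the first passage to the limit: making rigorous the $M_0$-convergence on the infinite-dimensional cone $\mathcal{C}_0$ (testing only against sets bounded away from the zero function), upgrading the convergence from the ``cylinder'' sets appearing in the definition of regular variation to the sets $\{\ell>1\}$ and $A\cap\{\ell>1\}$ via a Portmanteau-type argument, and verifying that these are genuine $\mu_0$-continuity sets — which is where the continuity of $\ell$ (at the origin, plus enough regularity elsewhere so that $\partial\{\ell>1\}\subseteq\{\ell=1\}$) and the finiteness of $\sigma$ and of $c$ are used. By contrast, Steps in the third paragraph are routine polar-coordinate bookkeeping.
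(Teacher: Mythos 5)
The paper itself does not prove this lemma: it is imported verbatim as Theorem~3 of \cite{dombry2015}, so there is no internal proof to compare yours against. What you have written is, in essence, the standard proof of that theorem: rewrite the conditional law as a ratio of unconditional probabilities of $u^{-1}X|_T$, pass to the limit via the $M_0$-convergence formulation of functional regular variation \citep{Hult2006}, and identify the normalized restriction of the limit measure $\mu_0$ to $\{\ell>1\}$ with the law of the $\ell$-Pareto process through the polar change of variables generating $\sigma_\ell$. Your third paragraph is correct as it stands (including $\mu_0(\{\ell>1\})=c/\sigma(\mathcal{S})$, the substitution $\rho = s/\ell(\theta)$, and the identification of $\P(W\in A)$ via the independence of $\ell(W)$ and $W/\ell(W)$ in Definition~\ref{def:l-pareto}), as are the preliminary observations that continuity of $\ell$ at the origin plus homogeneity bounds $\ell$ on $\mathcal{S}$, keeps $\{\ell>1\}$ bounded away from the zero function, and makes $0<c<\infty$. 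Three caveats, all of which you at least partially flag yourself: (i) the convergence is weak convergence, so the lemma's ``for all $A\in\mathcal{B}(\mathcal{C}_0)$'' must be read as holding for continuity sets $\P(W\in\partial A)=0$ --- your restriction is the correct reading, and combining it with $\mu_0(\partial\{\ell>1\})=0$ does make $A\cap\{\ell>1\}$ a $\mu_0$-continuity set; (ii) the inclusion $\partial\{\ell>1\}\subseteq\{\ell=1\}$ requires continuity of $\ell$ on $\mathcal{C}_0$ (or at least near the level set), not merely at the origin as in the abridged hypothesis restated here, which is consistent with the hypotheses of the original theorem but worth stating explicitly; (iii) ``the normalizing function $a(u)$ drops out'' compresses a genuine step, namely that the $a(u)$-indexed definition recalled in Appendix~\ref{sec:appendix_rv} implies regular variation of the tail of $\|X\|_T$ along all thresholds, which is what licenses replacing $a(u)$ by $u$; this and the Portmanteau upgrade are standard but are precisely the content of the cited functional regular-variation machinery. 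With those points made precise, or simply cited, your argument is a complete and faithful reconstruction of the proof in the source the paper relies on.
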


\renewcommand\thesection{B} 

\section{Proofs}\label{ProofsProvidedResults}
\begin{proof}[Proof of Proposition~\ref{prp:cdf}]
Since $R^{(u)}_0$ is almost surely non-negative, it suffices to check that \eqref{eqn:R_distn_func} holds for $r\geq 0$. Note that by the continuity of $X$, the excursion set $E_X(u)$ is open, and so the events $\{\tilde R^{(u)}_0 > r\}$ and $\{B(0,r) \subset E_X(u)\}$ are equal. Furthermore, 
\begin{align*}
\P\big(R^{(u)}_0 > r\big) &= \P\big(B(0,r)\subset E_X(u)  \vert X(0) > u(0)\big)=\frac{\lebesgue_d(T)\P\big(B(0,r)\subset E_X(u)\big)}{\lebesgue_d(T)\P\big(X(0) > u(0)\big)}\\
&= \frac{\E\Big[\int_T \I{B(s,r)\subset E_X(u)} \d s\Big]}{\E\Big[\int_T \I{X(s) > u(s)}\d s \Big]}=\frac{\E\big[\lebesgue_d\big(\{s\in T:B(s,r)\subset E_X(u)\}\big)\big]}{\E\big[\lebesgue_d\big(E_X(u)\cap T\big)\big]}\\
&= \frac{\E\big[\lebesgue_d\big(E_X(u)_{-r}\cap T\big)\big]}{\E\big[\lebesgue_d\big(E_X(u) \cap T\big)\big]}.
\end{align*}
\end{proof}

\begin{proof}[Proof of Lemma~\ref{lem:continuity}]
Let $r > 0$ and let $T\subset\Reals^{d}$ be compact with positive Lebesgue measure. Define $A:= \{s\in\Reals^{d}: \dist(s, E_X(u)^c) = r\}$, and let $x\in A$. For each $\epsilon \in (0,r)$, the {Euclidean}  ball  $B(x,\epsilon)$ contains an open ball $\tilde B$ of radius $\epsilon/2$ such that for all $y\in\tilde B$, one has $\dist(y,E_X(u)^c) < r$. In particular, $\tilde B \cap A = \emptyset$, and so the Lebesgue density of $A$ at $x$ cannot exceed $1-2^{-d}$. By the Lebesgue differentiation theorem, the Lebesgue density of $A$ at $s$ is 1 for almost every $s\in A$. Since there are no elements of $A$ for which this holds, $\lebesgue_d(A) = 0$.

Suppose the statement of   Lemma \ref{lem:continuity} is false. Then, there exists  $r>0$ such that $\P(\tilde R^{(u)}(0) = r \, |\,  X(0) > u) > 0$, or equivalently, $\P(\tilde R^{(u)}(0) = r) > 0$. By stationarity and Fubini-Tonelli's theorem,
\begin{align*}
    0 < \P(\tilde R^{(u)}(0) = r) &= \E\bigg[\int_{T}\I{\tilde R^{(u)}(s) = r}\ \d s\bigg] = \E\big[\lebesgue_d(A)\big].
\end{align*}
Hence, we have  
 a contradiction, as $\P(\lebesgue_d(A) > 0) = 0$.
\end{proof}
 
\begin{proof}[Proof of Theorem~\ref{thm:distribution_small_r}]
    For $r > 0$,
    \begin{align*}
        \P(R_0^{(u)} \leq r) & = \frac{\P\left(\tilde R_0^{(u)} \leq r \cap X(0) > u(0)\right)}{\P\left(X(0) > u(0)\right)} \\
        & = \frac{\P\left(0 \in E_X(u) \setminus E_X(u)_{-r}\right)}{\P\left(X(0) > u(0)\right)} \\
        & = \frac{\P\left(0 \in E_X(u) \cap (E_X(u)^c \oplus B(0,r))\right)}{\P\left(X(0) > u(0)\right)} \\
        & = \frac{\P\left(0 \in (E_X(u)^c)_{r} \setminus E_X(u)^c\right)}{\P\left(X(0) > u(0)\right)}.
    \end{align*}
    Fix $n\in\N$ and let $T = [-n,n]^d \subset \Reals^d$. Let $t\in \Reals^d$ be a random element uniformly distributed on $T$. Then, by  the stationarity  of the excursion set $E_X(u)$, we may shift our reference point from the origin to $t$ and write
    \begin{align*}
        \P(R_0^{(u)} \leq r) & = \P(R_t^{(u)} \leq r) = \frac{\P\left(t \in T \cap (E_X(u)^c)_{r} \setminus E_X(u)^c\right)}{\P\left(X(0) > u(0)\right)}.
    \end{align*}
    Now, remark that the sets $T \cap (E_X(u)^c)_{r} \setminus E_X(u)^c$ and $(E_X(u)^c\cap T)_{r} \setminus (E_X(u)^c \cap T)$ are both contained in $T_r$ and are equal when intersected with $T_{-r}$ almost surely. Thus, the absolute difference of their Lebesgue measures satisfies
    \begin{equation}
        \Big| \lebesgue_d\left((E_X(u)^c\cap T)_{r} \setminus (E_X(u)^c \cap T)\right) - \lebesgue_d\left(T \cap (E_X(u)^c)_{r} \setminus E_X(u)^c\right)\Big| \leq \lebesgue_d(T_r \setminus T_{-r}),
    \end{equation}
    almost surely. Therefore,
    \begin{equation}\label{eqn:before_division_by_r}
        \bigg| \P(R_0^{(u)} \leq r) - \frac{\E\left[\lebesgue_d\left((E_X(u)^c\cap T)_{r} \setminus (E_X(u)^c \cap T)\right)\right]}{\lebesgue_d(T)\P\left(X(0) > u(0)\right)} \bigg| \leq \frac{\lebesgue_d(T_r \setminus T_{-r})}{\lebesgue_d(T)\P\left(X(0) > u(0)\right)}.
    \end{equation}
    By Assumption~\ref{assumtion}, $E_X(u)^c \cap T$
    is almost surely a positive reach set, and so we may apply the Tube formula (see Formula (3.5.2) in 
 \cite{AT11}) to write
    \begin{align*}
        \lebesgue\left((E_X(u)^c\cap T)_{r} \setminus (E_X(u)^c \cap T)\right) &= \lebesgue\left((E_X(u)^c\cap T)_{r}\right) - \lebesgue\left(E_X(u)^c \cap T\right) \\
        &= \lebesgue_{d-1}\left(\partial(E_X(u)^c \cap T)\right) \times r + \mathcal O (r^2),
    \end{align*}
    where equality holds almost surely. A division of Equation~\eqref{eqn:before_division_by_r} by $r$ yields
    \begin{equation*}
        \bigg| \frac{\P(R_0^{(u)} \leq r)}{r} - \frac{\E\left[\lebesgue_{d-1}\left(\partial(E_X(u)^c \cap T)\right)\right]}{\lebesgue_d(T)\P\left(X(0) > u(0)\right)} + \mathcal O(r) \bigg| \leq \frac{\lebesgue_d(T_r \setminus T_{-r})}{r\lebesgue_d(T)\P\left(X(0) > u(0)\right)},
    \end{equation*}
    and
    \begin{equation*}
        \limsup_{r\to 0} \bigg| \frac{\P(R_0^{(u)} \leq r)}{r} - \frac{\E\left[\lebesgue_{d-1}\left(\partial(E_X(u)^c \cap T)\right)\right]}{\lebesgue_d(T)\P\left(X(0) > u(0)\right)} + \mathcal O(r) \bigg| \leq \frac{2d}{n\P\left(X(0) > u(0)\right)},
    \end{equation*}
    where $n$ is half the side length of the hypercube $T$. Since $n$ can be taken arbitrarily large, we have the desired result 
    \begin{equation*}
        \lim_{r\to 0} \frac{\P(R_0^{(u)} \leq r)}{r} = \lim_{n\to\infty} \frac{\E\left[\lebesgue_{d-1}\left(\partial(E_X(u)^c \cap T)\right)\right]}{\lebesgue_d(T)\P\left(X(0) > u(0)\right)} = \frac{2C^*_{d-1}(E_X(u)^c)}{C^*_{d}(E_X(u))} = \frac{2C^*_{d-1}(E_X(u))}{C^*_{d}(E_X(u))}.
    \end{equation*}
\end{proof}

\begin{proof}[Proof of Proposition~\ref{prp:gaussian_nondegen_limiting}]

\label{sec:proofs_gaussian_nondegen_limiting}

    In \cite{kac1959}, it is shown that for a one-dimensional centered Gaussian process $\{Y(t)\}_{t\in\Reals}$ having the stationary covariance function in \eqref{eqn:gaussian_correlation}, it holds that
  $$u\big(Y(t/u)-u\big)   | \{Y(0) = u,\, Y'(0) > 0\} \xrightarrow[u\to\infty]{\mathrm{d}} -\frac{\lambda}{2}t^2 + \xi_\lambda t,$$
    where the convergence holds for the finite dimensional distributions of the process (in $t$), and $\xi_\lambda$ is some random variable that depends on $\lambda$ and the sense of the conditioning on the event $Y(0) = u$, but not on $t$.
    Therefore, $u\big(Y(t/u)-u\big)   | Y(0) > u$ converges in the same sense to $-\frac{\lambda}{2} t^2 + \xi_1t + \xi_2$, where $\xi_1$ and $\xi_2$ are random variables, and $\xi_2 > 0$ almost surely.

    Now, focusing on the $d$-dimensional random field $X$, we see that $X$ evaluated on any one-dimensional affine linear subspace of $\Reals^d$   is a Gaussian process, and so the analysis in the preceding paragraph applies to these processes. Therefore, seen as a process in~$t$,
    \begin{equation}\label{eqn:random_disk_gaussian}
    u\big(X(t/u)-u\big)   \,|\, X(0) > u \xrightarrow[u\to\infty]{\mathrm{d}} -\frac{\lambda}{2}  \|t \| ^2 + \langle\widetilde{\xi_\lambda}, t\rangle + \xi,
    \end{equation}
    where $\xrightarrow{\mathrm{d}}$
 represents the convergence  in distribution,  for some random vector $\widetilde{\xi_\lambda}$ that depends on $\lambda$ but not on $t$, and for some almost surely positive random variable $\xi$. One can show that $\xi\sim\mathrm{Exp}(1)$ independently of $\lambda$ and $t$.

    Now,
    \begin{align*}
    uR_0^{(u)} &\stackrel{\d}{=} \sup\Big\{r \in \Reals^+ : B(0,r/u) \subset E_X(u)\Big\}  | X(0) > u\\
    &\stackrel{\d}{=} \sup\Big\{r \in \Reals^+ : B(0,r) \subset uE_X(u)\Big\}   | X(0) > u\\
    &\stackrel{\d}{=} \sup\Big\{r \in \Reals^+ : B(0,r) \subset E_{X(\cdot/u)}(u)\Big\}   | X(0) > u,
    \end{align*}
    \begin{sloppypar}where $\stackrel{\d}{=}$ represents  the equality in distribution.   Equation~\eqref{eqn:random_disk_gaussian} implies that $E_{X(\cdot/u)}(u)   | X(0) > u$ converges to a non-degenerate, random hypersphere  containing the origin as $u\to \infty$, which finishes the proof.\end{sloppypar}
\end{proof}

\begin{proof}[Proof of Proposition~\ref{prp:regularly_varying_extremal_range}]
    We begin by showing the second equality in~\eqref{eqn:rv_limits}. By the continuity of $X$, the excursion set $E_X(u)$ is open, and so the events $\{\tilde R^{(u)}_0 > r\}$ and $\{B(0,r) \subset E_X(u)\}$ are equal. Also, $E_X(u) = E_{X/u}(1)$, and so for $r < r_T$,
    \begin{align}\label{eqn:rv_convergence_2nd_equality}
    \P(R_0^{(u)} > r) &= \P(B(0,r) \subset E_X(u)   | X(0) > u) = \P(B(0,r) \subset E_{X/u}(1)   | X(0) > u)\\
    &\xrightarrow[u\to\infty]{} \P(B(0,r) \subset E_{Y_0}(1)) = 1 - \P\big(\exists t\in B(0,r): Y_0(t) \leq 1\big).\nonumber
    \end{align}
    The convergence in~\eqref{eqn:rv_convergence_2nd_equality} holds by Lemma~\ref{lem:rvlp}.
    To show the first equality in~\eqref{eqn:rv_limits}, remark that $(E_{Y_T}(u) \cap T)_{-r} = E_{Y_T}(u)_{-r} \cap T_{-r}$, almost surely.  Finally, recall from Proposition~\ref{prp:cdf} that
    \begin{align*}
    \P(R_0^{(u)} > r) &= \frac{\E\big[\lebesgue_d\big(E_X(u)_{-r}\cap T_{-r}\big)\big]}{\E\big[\lebesgue_d\big(E_X(u)\cap T_{-r}\big)\big]} = \frac{\E\big[\lebesgue_d\big((E_{X/u}(1)\cap T)_{-r}\big)  \,|\,   \|X  \|_T > u\big]}{\E\big[\lebesgue_d\big(E_{X/u}(1)\cap T_{-r}\big)  \,|\,   \|X  \|_T > u\big]}\\
    &\xrightarrow[u\to\infty]{} \frac{\E\left[\lebesgue_d\big((E_{Y_T}(1) \cap T)_{-r}\big)\right]}{\E\left[\lebesgue_d\big(E_{Y_T}(1) \cap T_{-r}\big)\right]} = \frac{\E\left[\lebesgue_d\big((E_{Y_T}(1) \cap T)_{-r}\big)\right]}{\lebesgue_d(T_{-r})\P({Y_T}(0) > 1)}.
    \end{align*}
    Hence the result.
\end{proof}

\end{document}